\newlength{\extramargin}
\newcommand {\Real}{\ensuremath{{\mathbb{R}}}}
\newcommand {\Complex}{\ensuremath{{\mathbb{C}}}}
\newcommand{\C}{\ensuremath{\mathcal C}}
\newcommand{\V}{\ensuremath{\mathcal V}}
\newcommand{\setS}{\ensuremath{\mathcal S}}
\newcommand{\U}{\ensuremath{\mathcal U}}
\newcommand{\N}{\ensuremath{\mathcal N}}
\newtheorem{theorem}{Theorem}
\newtheorem{corollary}{Corollary}
\newtheorem{lemma}{Lemma}
\newtheorem{definition}{Definition}
\newtheorem{proposition}{Proposition}
\newenvironment{proof}{\noindent {\bf Proof.}}{\hfill \hspace*{1pt}\hfill$\blacksquare$}
\begin{document}
\title{Observability through matrix-weighted graph}
\author{S. Emre Tuna\footnote{The author is with Department of
Electrical and Electronics Engineering, Middle East Technical
University, 06800 Ankara, Turkey. Email: {\tt
tuna@eee.metu.edu.tr}}} \maketitle

\begin{abstract}
Observability of an array of identical LTI systems with
incommensurable output matrices is studied, where an array is
called observable when identically zero relative outputs imply
synchronized solutions for the individual systems. It is shown
that the observability of an array is equivalent to the
connectivity of its interconnection graph, whose edges are
assigned matrix weights. The interconnection graph is studied by
means of a collection of simpler graphs, each of which is
associated to an eigenvalue of the system matrix of individual
dynamics. It is reported that the interconnection graph is
connected if and only if no member of this collection is
disconnected. Moreover, to better understand the relative behavior
of distant units, pairwise observability which concerns with the
synchronization of a certain pair of individual systems in the
array is studied. This milder version of observability is shown to
be closely related to certain connectivity properties of the
interconnection graph as well. Pairwise observability is also
analyzed using the circuit theoretic tool effective conductance.
The observability of a certain pair of units is proved to be
equivalent to the nonsingularity of the (matrix-valued) effective
conductance between the associated pair of nodes of a resistive
network (with matrix-valued parameters) whose node admittance
matrix is the Laplacian of the array's interconnection graph.
\end{abstract}

\section{Introduction}

Observability is one of the central concepts in systems theory
which, for LTI systems, can be expressed in many seemingly
different yet mathematically equivalent forms \cite{hespanha09}.
One alternative is the following. {\em A pair $[C,\,A]$ is
observable if $C(x_{1}(t)-x_{2}(t))\equiv 0$ implies
$x_{1}(t)\equiv x_{2}(t)$}, where $x_{i}(t)$ denote the solutions
of two identical systems ${\dot x}_{i}=Ax_{i}$, $i=1,\,2$.
Admittedly, this appears to be an uneconomical definition, for the
implication therein employs two systems where one would have
sufficed. The overuse, however, has a relative advantage: it
points in an interesting direction of generalization. Namely, for
a pair $[(C_{ij})_{i,j=1}^{q},\,A]$ the below condition suggests
itself as a natural extension.
\begin{eqnarray}\label{eqn:cond}
C_{ij}(x_{i}(t)-x_{j}(t))\equiv 0\ \mbox{for all}\ (i,\,j)
\implies x_{i}(t)\equiv x_{j}(t)\ \mbox{for all}\ (i,\,j)
\end{eqnarray}
where $x_{i}(t)$ are the solutions of $q$ identical systems ${\dot
x}_{i}=Ax_{i}$, $i=1,\,2,\,\ldots,\,q$. Aside from its theoretical
allure, this particular choice of generalization is not without
practical motivation; the condition~\eqref{eqn:cond} happens to be
both necessary and sufficient for synchronization of certain
arrays of oscillators. We give two examples in the sequel.

{\em Coupled electrical oscillators.} Consider the individual
oscillator in Fig.~\ref{fig:LCcircuit}, where $p$ linear inductors
with inductances $m_{1},\,m_{2},\,\ldots,\,m_{p}>0$ are connected
by linear capacitors with capacitances
$k_{1},\,k_{2},\,\ldots,\,k_{p+1}>0$.
\begin{figure}[h]
\begin{center}
\includegraphics[scale=0.55]{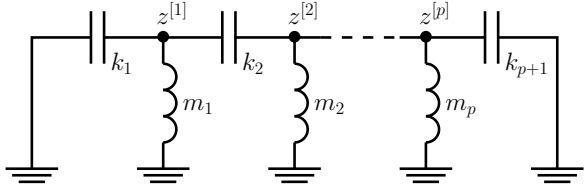}
\caption{Electrical oscillator.}\label{fig:LCcircuit}
\end{center}
\end{figure}
The node voltages are denoted by $z^{[\ell]}\in\Real$. Letting
$z=[z^{[1]}\ z^{[2]}\ \cdots\ z^{[p]}]^{T}$ the model of this
system reads $K{\ddot z}+M^{-1}z=0$ where $M={\rm
diag}(m_{1},\,m_{2},\,\ldots,\,m_{p})$ and
\begin{eqnarray*}
K=\left[\begin{array}{ccccc}k_{1}+k_{2}&-k_{2}&0&\cdots&0\\-k_{2}&k_{2}+k_{3}&-k_{3}&\cdots&0\\0&-k_{3}&k_{3}+k_{4}&\cdots&0\\
\vdots&\vdots&\vdots&\ddots&\vdots\\0&0&0&\cdots&k_{p}+k_{p+1}\end{array}\right]\,.
\end{eqnarray*}

Let now an array be constructed by coupling $q$ identical
oscillators in the arrangement shown in Fig.~\ref{fig:LCarray}. If
we let $z_{i}\in\Real^{p}$ denote the node voltage vector for the
$i$th oscillator and $b_{ij}^{[\ell]}=b_{ji}^{[\ell]}\geq 0$ be
the conductance of the resistor connecting the $\ell$th nodes of
the oscillators $i$ and $j$, we obtain $K{\ddot
z}_{i}+M^{-1}z_{i}+\sum_{j=1}^{q}B_{ij}({\dot z}_{i}-{\dot
z}_{j})=0$, where $B_{ij}={\rm
diag}(b_{ij}^{[1]},\,b_{ij}^{[2]},\,\ldots,\,b_{ij}^{[p]})$.
Denoting by $x_{i}=[z_{i}^{T}\ {\dot z}_{i}^{T}]^{T}$ the state of
the $i$th system we can then rewrite the coupled dynamics as
\begin{eqnarray}\label{eqn:LC-oscillator}
{\dot
x}_{i}=\left[\begin{array}{cc}0&I_{p}\\-K^{-1}M^{-1}&0\end{array}\right]x_{i}
+\sum_{j=1}^{q}\left[\begin{array}{cc}0&0\\0&K^{-1}B_{ij}\end{array}\right](x_{j}-x_{i})\,,\qquad
i=1,\,2,\,\ldots,\,q\,.
\end{eqnarray}

To understand the collective behavior of these coupled oscillators
one can employ the Lyapunov function
\begin{eqnarray}\label{eqn:lyap}
V(x_{1},\,x_{2},\,\ldots,\,x_{q})=\frac{1}{2}\sum_{i=1}^{q}x_{i}^{T}\left[\begin{array}{cc}M^{-1}&0\\0&K\end{array}\right]x_{i}\,.
\end{eqnarray}
In particular, combining \eqref{eqn:LC-oscillator} and
\eqref{eqn:lyap} we reach
\begin{eqnarray}\label{eqn:Vdot}
\frac{d}{dt}V(x_{1}(t),\,x_{2}(t),\,\ldots,\,x_{q}(t))=-\sum_{i>j}(x_{i}(t)-x_{j}(t))^{T}\left[\begin{array}{cc}0&0\\0&B_{ij}\end{array}\right](x_{i}(t)-x_{j}(t))\,.
\end{eqnarray}
Note that the righthand side is negative semidefinite because all
$B_{ij}$ are positive semidefinite. Hence the solutions
$x_{1}(t),\,x_{2}(t),\,\ldots,\,x_{q}(t)$ remain bounded. Now, for
\begin{eqnarray*}
A=\left[\begin{array}{cc}0&I_{p}\\-K^{-1}M^{-1}&0\end{array}\right]\quad
\mbox{and}\quad
C_{ij}=\left[\begin{array}{cc}0&0\\0&B_{ij}\end{array}\right]
\end{eqnarray*}
suppose that the condition~\eqref{eqn:cond} holds. Then, and only
then, \eqref{eqn:Vdot} yields by Krasovskii-LaSalle invariance
principle \cite{khalil00} that the oscillators synchronize, i.e.,
$\|x_{i}(t)-x_{j}(t)\|\to 0$ for all $(i,\,j)$ and all initial
conditions $x_{1}(0),\,x_{2}(0),\,\ldots,\,x_{q}(0)$.

\begin{figure}[h]
\begin{center}
\includegraphics[scale=0.55]{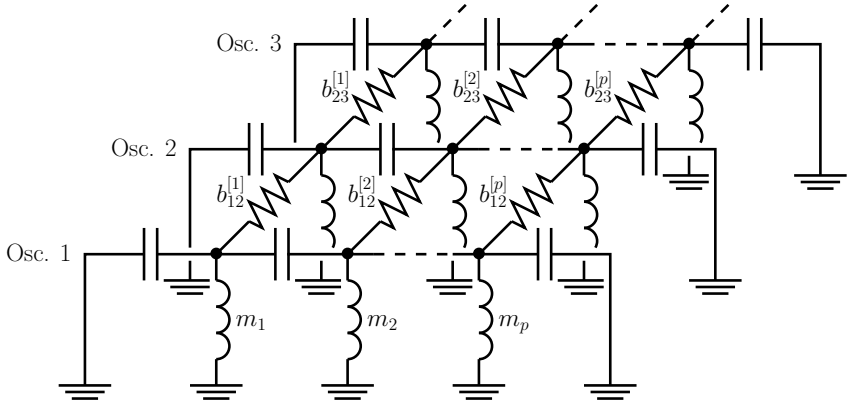}
\caption{Array of coupled electrical
oscillators.}\label{fig:LCarray}
\end{center}
\end{figure}

{\em Coupled mechanical oscillators.} Our second example employs
the mechanical system shown in Fig.~\ref{fig:masspring}, where $p$
masses are connected by linear springs. Such chains are used to
model the interaction of atoms in a crystal \cite{dove93}.
\begin{figure}[h]
\begin{center}
\includegraphics[scale=0.55]{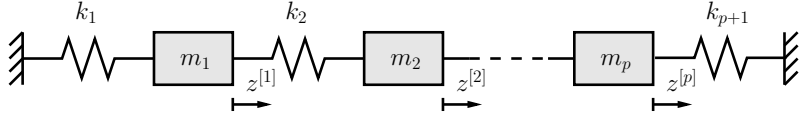}
\caption{Mechanical oscillator.}\label{fig:masspring}
\end{center}
\end{figure}
Let $z^{[\ell]}\in\Real$ be the displacement of the mass
$m_{\ell}>0$ from the equilibrium. The spring constants are
denoted by $k_{1},\,k_{2},\,\ldots,\,k_{p+1}>0$. Letting
$z=[z^{[1]}\ z^{[2]}\ \cdots\ z^{[p]}]^{T}$ the model of this
oscillator reads $M{\ddot z}+Kz=0$, where the matrices $M,\,K$ are
borrowed from the previous example.

Let now an array be formed by coupling $q$ replicas of this
oscillator in the arrangement shown in Fig.~\ref{fig:msparray}. If
we let $z_{i}\in\Real^{p}$ denote the displacement vector for the
$i$th oscillator and $b_{ij}^{[\ell]}=b_{ji}^{[\ell]}\geq 0$
represent the viscous friction (damping) between the $\ell$th
masses of the oscillators $i$ and $j$, we can write $M{\ddot
z}_{i}+Kz_{i}+\sum_{j=1}^{q}B_{ij}({\dot z}_{i}-{\dot z}_{j})=0$
with $B_{ij}={\rm
diag}(b_{ij}^{[1]},\,b_{ij}^{[2]},\,\ldots,\,b_{ij}^{[p]})$.
\begin{figure}[h]
\begin{center}
\includegraphics[scale=0.55]{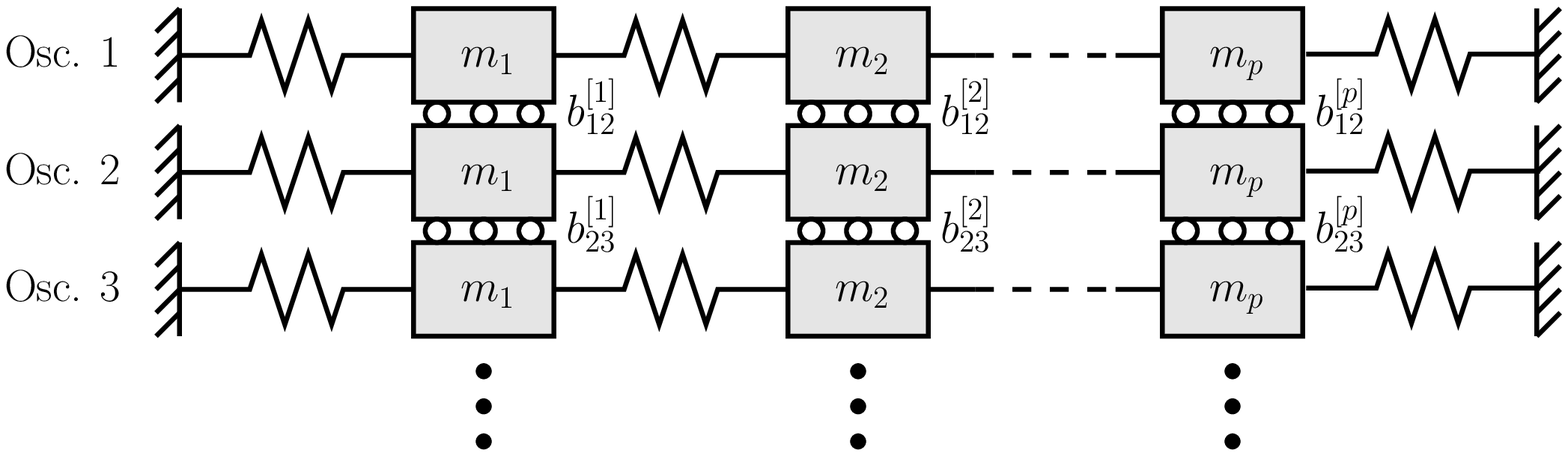}
\caption{Array of coupled mechanical
oscillators.}\label{fig:msparray}
\end{center}
\end{figure}
The Lyapunov approach previously adopted for the synchronization
analysis of the coupled electrical oscillators is valid here, too.
The outcome is the same. Namely, under the
condition~\eqref{eqn:cond}, this time with
\begin{eqnarray*}
A=\left[\begin{array}{cc}0&I_{p}\\-M^{-1}K&0\end{array}\right]\quad
\mbox{and}\quad
C_{ij}=\left[\begin{array}{cc}0&0\\0&B_{ij}\end{array}\right]\,,
\end{eqnarray*}
the mechanical oscillators synchronize. Having motivated the
condition~\eqref{eqn:cond} in the context of synchronization, we
will next try to explain its relation to certain existing
assumptions.

Synchronization of linear systems is a broad area of research,
where one of the main goals of the researcher is to unearth
conditions under which the solutions of coupled units converge to
a common trajectory. Different sets of assumptions have led to a
rich collection of results, bringing our understanding on the
subject closer to complete; see, for instance,
\cite{li10,zhang11,zhou14,scardovi09,grip12,li13}. Despite their
differences in degree and direction of generality, all these works
share two assumptions in common: (i) the graph describing the
interconnection contains a spanning tree and (ii) the individual
system is observable (detectable). We intend to emphasize in this
paper that these two separate assumptions, the former on {\em
connectivity} and the latter on {\em observability}, dissolve
inseparably in the condition~\eqref{eqn:cond}. In particular, for
an array represented by the pair $[(C_{ij})_{i,j=1}^{q},\,A]$, it
is in general not meaningful to search for a spanning tree because
the interconnection graph will be matrix-weighted, whereas a tree
is well defined for a scalar-weighted graph only. As for the
second assumption, requiring the individual systems to be
observable also falls prey to ambiguity since there is not a
single output matrix for each system; instead every system is
coupled to each of its neighbors through a different matrix
$C_{ij}$. It is true that separation is possible in the special
case $C_{ij}=w_{ij}C$ with $C\in\Complex^{m\times n}$ and
$w_{ij}\in\Real$. In this much-studied scenario, where the output
matrices are commensurable, the scalar weights $w_{ij}$ are used
to construct the interconnection graph, which can be checked to
contain a spanning tree; and the pair $[C,\,A]$ can separately be
checked for observability. However, in general, the
condition~\eqref{eqn:cond} in its entirety is what we have to deal
with, which requires that we work with the matrix-weighted graphs.
We will explain how these matrix-valued weights emerge soon. But
first, let us review the scarce literature on observability over
networks.

Observability over networks, motivated in general by
synchronization (consensus) of coupled systems, is largely an
unexplored area of research. Among the few works is \cite{ji07},
where the observability of sensor networks is studied by means of
equitable partitions of graphs. This tool is employed also in
\cite{oclery13}. The observability of path and cycle graphs is
studied in \cite{parlangeli12} and of grid graphs in
\cite{notarstefano13}. Recently, the networks whose individual
systems' dynamics are allowed to be nonidentical is covered in
\cite{zhou15}. Each of these investigations covers a different
case, yet they all consider interconnections that can be described
by graphs with scalar-weighted edges. At this point our work is
located relatively far from the reported results. In particular,
to the best of our knowledge, observability over matrix-weighted
graphs has not yet been studied in detail.

In the first half of this paper we report conditions on the array
$[(C_{ij})_{i,j=1}^{q},\,A]$ that imply observability in the sense
of \eqref{eqn:cond}. To this end, we construct a graph $\Gamma$
(with $q$ vertices) where to each pair of vertices
$(v_{i},\,v_{j})$ we assign a weight that is a Hermitian positive
semidefinite matrix, whose null space is the unobservable subspace
corresponding to the individual pair $[C_{ij},\,A]$. We reveal
that the array $[(C_{ij})_{i,j=1}^{q},\,A]$ is observable if and
only if the interconnection graph $\Gamma$ is connected. Also, we
notice that for each distinct eigenvalue of $A$ there exists a
graph (we call it an {\em eigengraph}) and the observability of
$[(C_{ij})_{i,j=1}^{q},\,A]$ is ensured if no eigengraph is
disconnected. For our analysis we define the connectivity of a
graph through a certain spectral property of its Laplacian. We
note that the connectivity of a matrix-weighted graph cannot in
general be characterized by the standard tools of graph theory
such as path and tree. The reason is that the meaning or function
of an edge (out of which paths and trees are constructed) becomes
equivocal when one has to allow semidefinite weights.

In the second half of the paper we focus on the so-called
$(k,\,\ell)$-observability of $[(C_{ij})_{i,j=1}^{q},\,A]$.
Namely, for a given pair of indices $(k,\,\ell)$, we search for
conditions under which $x_{k}(t)\equiv x_{\ell}(t)$ provided that
$C_{ij}(x_{i}(t)-x_{j}(t))\equiv 0\ \mbox{for all}\ (i,\,j)$. To
this end we define $(k,\,\ell)$-connectivity of a matrix-weighted
graph through its Laplacian. We show the expected equivalence
between the $(k,\,\ell)$-observability of the array
$[(C_{ij})_{i,j=1}^{q},\,A]$ and the $(k,\,\ell)$-connectivity of
the interconnection graph $\Gamma$ as well as the unexpected lack
of equivalence between the $(k,\,\ell)$-observability of the array
and the $(k,\,\ell)$-connectivity of its eigengraphs. Moreover, we
present the interesting interchangeability between the
$(k,\,\ell)$-observability of an array and the nonsingularity of
the matrix $\Gamma_{k\ell}$, where $\Gamma_{k\ell}$ is the
(matrix-valued) effective conductance between the nodes $k$ and
$\ell$ of a resistive network (with matrix-valued parameters)
whose node admittance matrix is the Laplacian of the array's
interconnection graph $\Gamma$. From a graph-theoretic point of
view the nonsingularity of the effective conductance $\Gamma_{kl}$
may be interpreted to indicate that the pair of vertices
$(v_{k},\,v_{\ell})$ of the matrix-weighted graph $\Gamma$ are
connected. This therefore allows one to study connectivity of
vertices without employing paths; which is potentially useful
since defining a path, as mentioned above, is problematic for
matrix-weighted graphs. One may ask why our formulation is in
terms of effective conductance instead of the commoner effective
resistance, e.g., \cite{scardovi16}. The reason is that the
conductances we work with are matrix-valued and not necessarily
invertible. That is, since resistance is the inverse of
conductance, we would have run into certain difficulties had we
chosen to employ effective resistance instead. Potential
applications of generalized electrical circuits with matrix-valued
parameters seem to have so far gone unnoticed by the control
theorists. Notable exceptions are the works
\cite{barooah06,barooah07,barooah08} on the problem of estimation
over networks.

\section{Preliminaries and notation}

In this section we provide the formal definitions for the
observability of an array and the connectivity of an $n$-graph
through its Laplacian matrix. (The reader should be warned that
the term {\em $n$-graph} has appeared in the literature in
different meanings. In this paper it means a weighted graph, where
each pair of vertices is assigned an $n$-by-$n$ matrix.)

A pair $[(C_{ij})_{i,j=1}^{q},\,A]$ is meant to represent the
array of $q$ identical systems
\begin{subeqnarray}\label{eqn:array}
{\dot x}_{i}&=&Ax_{i}\\
y_{ij} &=&C_{ij}(x_{i}-x_{j})\,,\qquad\,i,\,j=1,\,2,\,\ldots,\,q
\end{subeqnarray}
where $x_{i}\in\Complex^{n}$ is the state of the $i$th system with
$A\in\Complex^{n\times n}$ and $y_{ij}\in\Complex^{m_{ij}}$ is the
$ij$th relative output with $C_{ij}\in\Complex^{m_{ij}\times n}$.
We let $C_{ii}=0$. In our paper we will solely be studying the
case $y_{ij}(t)\equiv 0$ for all $(i,\,j)$. Hence we suppose
$C_{ij}=C_{ji}$ without loss of generality. The generality is not
lost because if $C_{ij}\neq C_{ji}$ then we can always redefine
$C_{ij}^{\rm new}=C_{ji}^{\rm new}:=[C_{ij}^{T}\ C_{ji}^{T}]^{T}$;
and then $y^{\rm new}_{ij}(t)\equiv 0$ for all $(i,\,j)$ if and
only if $y_{ij}(t)\equiv 0$ for all $(i,\,j)$. The ordered
collection $(C_{ij})_{i,j=1}^{q}$ will sometimes be compactly
written as $(C_{ij})$ when there is no risk of ambiguity.

For each $(i,\,j)$ we denote by $W_{ij}$ the observability matrix
of the individual pair $[C_{ij},\,A]$. Namely,
\begin{eqnarray*}
W_{ij}=\left[\begin{array}{c}C_{ij}\\C_{ij}A\\\vdots\\C_{ij}A^{n-1}\end{array}\right]\,.
\end{eqnarray*}
The associated unobservable subspace is denoted by
$\U_{ij}\subset\Complex^{n}$. Recall that $\U_{ij}={\rm
null}\,W_{ij}$ and that $\U_{ij}$ is invariant under $A$. In
particular, $x_{i}(0)-x_{j}(0)\in\U_{ij}$ implies
$x_{i}(t)-x_{j}(t)\in\U_{ij}$ for all $t$ since
$x_{i}(t)-x_{j}(t)=e^{At}(x_{i}(0)-x_{j}(0))$. By
$\mu_{1},\,\mu_{2},\,\ldots,\,\mu_{m}$ ($1\leq m\leq n$) we denote
the distinct eigenvalues of $A$. By
$V_{\sigma}\in\Complex^{n\times n_{\sigma}}$,
$\sigma=1,\,2,\,\ldots,\,m$, we denote a full column rank matrix
satisfying ${\rm range}\,V_{\sigma}={\rm
null}\,[A-\mu_{\sigma}I_{n}]$, where $I_{n}$ is the $n$-by-$n$
identity matrix. Note that the columns of $V_{\sigma}$ are the
linearly independent eigenvectors of $A$ corresponding to the
eigenvalue $\mu_{\sigma}$. In particular, we have
$AV_{\sigma}=\mu_{\sigma}V_{\sigma}$. The below definition is what
this paper is all about.

\begin{definition}\label{def:obs}
An array $[(C_{ij}),\,A]$ is said to be {\em observable} if
\begin{eqnarray*}
y_{ij}(t)\equiv 0\ \mbox{for all}\ (i,\,j) \implies x_{i}(t)\equiv
x_{j}(t)\ \mbox{for all}\ (i,\,j)
\end{eqnarray*}
for all initial conditions
$x_{1}(0),\,x_{2}(0),\,\ldots,\,x_{q}(0)$.
\end{definition}

An {\em $n$-graph} $\Gamma=(\V,\,w)$ has a finite set of vertices
$\V=\{v_{1},\,v_{2},\,\ldots,\,v_{q}\}$ and a weight function
$w:\V\times \V\to\Complex^{n\times n}$ with the properties
\begin{itemize}
\item $w(v,\,v)=0$,\item $w(u,\,v)=w(v,\,u)$,\item
$w(u,\,v)=w(u,\,v)^{*}\geq 0$,
\end{itemize}
where $w(u,\,v)^*$ indicates the conjugate transpose of
$w(u,\,v)$. Let $G_{ij}=w(v_{i},\,v_{j})$. The $nq$-by-$nq$ matrix
\begin{eqnarray*}\label{eqn:Laplacian}
{\rm lap}\,\Gamma=\left[\begin{array}{cccc}\sum_{j} G_{1j}&-G_{12}&\cdots&-G_{1q}\\
-G_{21}&\sum_{j}G_{2j}&\cdots&-G_{2q}\\
\vdots&\vdots&\ddots&\vdots\\
-G_{q1}&-G_{q2}&\cdots&\sum_{j}G_{qj}
\end{array}\right]
\end{eqnarray*}
is called the {\em Laplacian} of $\Gamma$. Let $L={\rm
lap}\,\Gamma$. By construction the Laplacian is Hermitian, i.e.,
$L^{*}=L$, and enjoys some other desirable properties. Let
$\xi=[x_{1}^{T}\ x_{2}^{T}\ \cdots\ x_{q}^{T}]^{T}$ with
$x_{i}\in\Complex^{n}$ and define the synchronization subspace as
$\setS_{n}=\{\xi\in(\Complex^{n})^{q}:x_{i}=x_{j}\ \mbox{for all}\
(i,\,j)\}$. We see that ${\rm null}\,L\supset\setS_{n}$. Also,
since we can write
$\xi^{*}L\xi=\sum_{i>j}(x_{i}-x_{j})^{*}G_{ij}(x_{i}-x_{j})\geq
0$, the Laplacian is positive semidefinite. Therefore all its
eigenvalues are real and nonnegative, thanks to which the ordering
$\lambda_{1}(L)\leq\lambda_{2}(L)\leq\ldots\leq\lambda_{qn}(L)$ is
not meaningless. In the sequel, $\lambda_{k}(L)$ denotes the $k$th
smallest eigenvalue of $L$.

We denote by $\Gamma(H_{ij})_{i,j=1}^{q}$ (or by $\Gamma(H_{ij})$
when there is no risk of confusion) the $n$-graph of a collection
$(H_{ij})_{i,j=1}^{q}$ with
$H_{ji}=H_{ij}\in\Complex^{m_{ij}\times n}$ and $H_{ii}=0$. The
graph $\Gamma(H_{ij})$ has the vertex set
$\V=\{v_{1},\,v_{2},\,\ldots,\,v_{q}\}$ and its weight function
$w$ is such that $w(v_{i},\,v_{j})=H_{ij}^{*}H_{ij}$. Regarding
the array~\eqref{eqn:array} two graph constructions are
particularly important. One of them is the $n$-graph
$\Gamma(W_{ij})_{i,j=1}^{q}$ which we call the {\em
interconnection graph}. The other is the $n_{\sigma}$-graph
$\Gamma(C_{ij}V_{\sigma})_{i,j=1}^{q}$, called the {\em
eigengraph} corresponding to the eigenvalue $\mu_{\sigma}$.

In graph theory~\cite{chung92}, connectivity (in the classical
sense) is characterized by means of adjacency. A connected graph
is said to have a path between each pair of its vertices, where a
path is a sequence of adjacent vertices. For 1-graphs the
definition of adjacency is unequivocal: a pair of vertices
$(u,\,v)$ are adjacent if $w(u,\,v)>0$ and nonadjacent if
$w(u,\,v)=0$. (Adjacent vertices are said to have an edge between
them.) For $n$-graphs ($n\geq 2$) however, since we have the
in-between semidefinite case $w(u,\,v)\geq 0$, how to define
adjacency and, in turn, connectivity becomes a matter of choice.
For our purposes in this paper we (inevitably) abandon the concept
of adjacency altogether and define connectivity of a graph through
its Laplacian. Recall that a 1-graph $\Gamma$ is connected if and
only if $\lambda_{2}({\rm lap}\,\Gamma)>0$. Since this is an
equivalence result it can replace the definition of connectivity
for $1$-graphs. This substitute turns out to be much easier to
generalize than the standard definition that uses paths.

\begin{definition}
An $n$-graph $\Gamma$ is said to be {\em connected} if
$\lambda_{n+1}({\rm lap}\,\Gamma)>0$.
\end{definition}

The next three facts will find frequent use later in the paper.

\begin{lemma}\label{lem:one}
An $n$-graph $\Gamma$ is connected if and only if ${\rm
null}\,{\rm lap}\,\Gamma=\setS_{n}$.
\end{lemma}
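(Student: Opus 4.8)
The plan is to exploit the fact, already recorded in the text, that $\setS_{n}\subseteq{\rm null}\,L$ where $L={\rm lap}\,\Gamma$, together with the positive semidefiniteness of $L$. First I would note that $\dim\setS_{n}=n$, which is immediate from the linear isomorphism $x\mapsto[x^{T}\ x^{T}\ \cdots\ x^{T}]^{T}$ between $\Complex^{n}$ and $\setS_{n}$. Combined with the inclusion $\setS_{n}\subseteq{\rm null}\,L$, this forces $\dim{\rm null}\,L\geq n$, and hence $\lambda_{1}(L)=\lambda_{2}(L)=\cdots=\lambda_{n}(L)=0$, since for a Hermitian matrix the number of vanishing eigenvalues equals the dimension of its null space.

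The pivot of the argument is then the observation that, under the standing inclusion $\setS_{n}\subseteq{\rm null}\,L$ with $\dim\setS_{n}=n$, the equality ${\rm null}\,L=\setS_{n}$ holds if and only if $\dim{\rm null}\,L=n$. One direction is trivial; for the other, a subspace that contains $\setS_{n}$ and has the same dimension $n$ must coincide with it. So it remains to link $\dim{\rm null}\,L=n$ with the defining inequality $\lambda_{n+1}(L)>0$.

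For this last step I would use that $L$ is positive semidefinite, so its eigenvalues ordered as $\lambda_{1}(L)\leq\cdots\leq\lambda_{qn}(L)$ are all nonnegative and $\dim{\rm null}\,L$ equals the number of indices $k$ with $\lambda_{k}(L)=0$. Since the first $n$ of these have already been shown to vanish, the condition $\dim{\rm null}\,L=n$ is equivalent to $\lambda_{n+1}(L)>0$, which is precisely the definition of $\Gamma$ being connected. Chaining the equivalences ${\rm null}\,L=\setS_{n}\Leftrightarrow\dim{\rm null}\,L=n\Leftrightarrow\lambda_{n+1}(L)>0$ then yields the claim.

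I do not anticipate a genuine obstacle here; the single point requiring minor care is the bookkeeping between the multiplicity of the zero eigenvalue and the dimension of the null space, which is justified by the Hermitian (hence unitarily diagonalizable, real-spectrum) structure of $L$ established in the preliminaries.
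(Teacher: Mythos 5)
Your proposal is correct and follows essentially the same route as the paper's own proof: both arguments rest on $\setS_{n}\subseteq{\rm null}\,{\rm lap}\,\Gamma$ with $\dim\setS_{n}=n$, and on the Hermitian structure of the Laplacian to identify the multiplicity of the zero eigenvalue with $\dim{\rm null}\,{\rm lap}\,\Gamma$, so that ${\rm null}\,{\rm lap}\,\Gamma=\setS_{n}$ holds exactly when $\lambda_{n+1}({\rm lap}\,\Gamma)>0$. The only difference is presentational: you organize the argument as a single chain of equivalences through $\dim{\rm null}\,{\rm lap}\,\Gamma=n$, whereas the paper proves the two implications separately.
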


\begin{proof}
Let $L={\rm lap}\,\Gamma$ denote the Laplacian. Suppose ${\rm
null}\,L=\setS_{n}$. By definition ${\rm dim}\,\setS_{n}=n$.
Therefore $L$ has $n$ linearly independent eigenvectors whose
eigenvalues are zero. Since $L^{*}=L$ this means that $L$ has
exactly $n$ eigenvalues at the origin. That all the eigenvalues of
$L$ are nonnegative then yields $\lambda_{n+1}(L)>0$. To show the
other direction this time we begin by letting
$\lambda_{n+1}(L)>0$. That is, $L$ has at most $n$ eigenvalues at
the origin. The property $L^{*}=L$ then implies that $L$ has at
most $n$ eigenvectors whose eigenvalues are zero. In other words,
${\rm dim}\,{\rm null}\,L\leq n$. This implies, in the light of
the facts ${\rm null}\,L\supset\setS_{n}$ and ${\rm
dim}\,\setS_{n}=n$, that ${\rm null}\,L=\setS_{n}$.
\end{proof}

\begin{lemma}\label{lem:two}
Consider the solutions $x_{i}(t)$ of the array~\eqref{eqn:array}.
Let $\xi(t)=[x_{1}(t)^{T}\ x_{2}(t)^{T}\ \cdots\
x_{q}(t)^{T}]^{T}$ and $L={\rm lap}\,\Gamma(W_{ij})$. We have
\begin{eqnarray*}
y_{ij}(t)\equiv 0\ \mbox{for all}\ (i,\,j) \quad
\Longleftrightarrow\quad L\xi(t) \equiv 0 \quad
\Longleftrightarrow\quad L\xi(0) = 0\,.
\end{eqnarray*}
\end{lemma}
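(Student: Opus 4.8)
The plan is to prove the two stated equivalences by showing that all three conditions reduce to one common algebraic condition on the initial data, namely
$W_{ij}(x_{i}(0)-x_{j}(0))=0$ for all $(i,\,j)$, equivalently $x_{i}(0)-x_{j}(0)\in\U_{ij}$ for all $(i,\,j)$. The whole argument leans on two facts already in hand: that $L={\rm lap}\,\Gamma(W_{ij})$ is Hermitian positive semidefinite with quadratic form $\xi^{*}L\xi=\sum_{i>j}(x_{i}-x_{j})^{*}G_{ij}(x_{i}-x_{j})$, where for this graph $G_{ij}=W_{ij}^{*}W_{ij}$, and that each unobservable subspace $\U_{ij}={\rm null}\,W_{ij}$ is invariant under $A$.

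First I would dispatch the rightmost equivalence. Since $L\geq 0$ is Hermitian, $L\eta=0$ holds if and only if $\eta^{*}L\eta=0$ for any vector $\eta$. Applying this with $\eta=\xi(0)$ and substituting $G_{ij}=W_{ij}^{*}W_{ij}$ turns the quadratic form into $\xi(0)^{*}L\xi(0)=\sum_{i>j}\|W_{ij}(x_{i}(0)-x_{j}(0))\|^{2}$. A sum of squared norms vanishes precisely when each term does, so $L\xi(0)=0$ is equivalent to $W_{ij}(x_{i}(0)-x_{j}(0))=0$ for every $(i,\,j)$, i.e. to the common condition above.

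Next I would tie the output condition to the same algebraic statement. Because $x_{i}(t)-x_{j}(t)=e^{At}(x_{i}(0)-x_{j}(0))$, the relative output is $y_{ij}(t)=C_{ij}e^{At}(x_{i}(0)-x_{j}(0))$. Differentiating at $t=0$ (equivalently, using analyticity together with Cayley--Hamilton) shows that $y_{ij}(t)\equiv 0$ holds if and only if $C_{ij}A^{k}(x_{i}(0)-x_{j}(0))=0$ for $k=0,1,\ldots,n-1$, which is exactly $W_{ij}(x_{i}(0)-x_{j}(0))=0$ by the definition of the observability matrix. This is the standard observability characterization and the single place where the structure of $W_{ij}$ genuinely enters; I expect this to be the main point to pin down carefully, although it is routine. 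Combined with the previous paragraph it already yields the equivalence of $y_{ij}(t)\equiv 0$ for all $(i,\,j)$ with $L\xi(0)=0$.

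Finally I would close the middle equivalence. The implication $L\xi(t)\equiv 0\Rightarrow L\xi(0)=0$ is trivial by evaluating at $t=0$. For the converse, assume $L\xi(0)=0$, so $x_{i}(0)-x_{j}(0)\in\U_{ij}$ for all $(i,\,j)$. Here the $A$-invariance of each $\U_{ij}$ does the real work: it gives $x_{i}(t)-x_{j}(t)=e^{At}(x_{i}(0)-x_{j}(0))\in\U_{ij}$ for every $t$, whence $W_{ij}(x_{i}(t)-x_{j}(t))=0$, so $\xi(t)^{*}L\xi(t)=0$, and therefore $L\xi(t)=0$ for all $t$ by the same positive-semidefiniteness used above. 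Chaining the three steps establishes both equivalences. The only substantive ingredient beyond the positive-semidefinite bookkeeping is this invariance, which is precisely what propagates the time-zero condition to all times.
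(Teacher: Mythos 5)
Your proof is correct and follows essentially the same route as the paper: both arguments rest on the quadratic-form identity $\xi^{*}L\xi=\sum_{i>j}\|W_{ij}(x_{i}-x_{j})\|^{2}$ together with the fact that $L\zeta=0\Leftrightarrow\zeta^{*}L\zeta=0$ for Hermitian positive semidefinite $L$, and both use the $A$-invariance of the subspaces $\U_{ij}$ to propagate the time-zero condition to all $t$. The only difference is cosmetic: you spell out, via differentiation at $t=0$ and Cayley--Hamilton, the standard equivalence $y_{ij}(t)\equiv 0\Leftrightarrow W_{ij}(x_{i}(0)-x_{j}(0))=0$, which the paper simply recalls as the known characterization of the unobservable subspace.
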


\begin{proof}
Let $\zeta=[\rho_{1}^{T}\ \rho_{2}^{T}\ \ldots\ \rho_{q}^{T}]^{T}$
with $\rho_{i}\in\Complex^{n}$. Observe
$\sum_{i>j}\|W_{ij}(\rho_{i}-\rho_{j})\|^{2}
=\sum_{i>j}(\rho_{i}-\rho_{j})^{*}W_{ij}^{*}W_{ij}(\rho_{i}-\rho_{j})
=\zeta^{*}L\zeta$. Since $L^{*}=L\geq 0$ we also have
$\zeta^{*}L\zeta=0$ if and only if $L\zeta=0$. Recalling
$\U_{ij}={\rm null}\,W_{ij}$ we can now write
\begin{eqnarray*}
\rho_{i}-\rho_{j}\in\U_{ij}\ \mbox{for all}\ (i,\,j)
&\Longleftrightarrow& W_{ij}(\rho_{i}-\rho_{j})=0\ \mbox{for
all}\ (i,\,j)\\
&\Longleftrightarrow& \|W_{ij}(\rho_{i}-\rho_{j})\|^{2}=0\
\mbox{for all}\ (i,\,j)\\
&\Longleftrightarrow& \sum_{i>j}\|W_{ij}(\rho_{i}-\rho_{j})\|^{2}=
0\\
&\Longleftrightarrow& \zeta^{*}L\zeta=0\\
&\Longleftrightarrow& L\zeta=0\,.
\end{eqnarray*}
Therefore
\begin{eqnarray*}
y_{ij}(t)\equiv 0\ \mbox{for all}\ (i,\,j) &\Longleftrightarrow&
x_{i}(0)-x_{j}(0)\in\U_{ij}\ \mbox{for all}\ (i,\,j)\\
&\Longleftrightarrow& L\xi(0)=0\,.
\end{eqnarray*}
Recall that $x_{i}(0)-x_{j}(0)\in\U_{ij}$ implies
$x_{i}(t)-x_{j}(t)\in\U_{ij}$ for all $t$. Hence
\begin{eqnarray*}
y_{ij}(t)\equiv 0\ \mbox{for all}\ (i,\,j) &\Longleftrightarrow&
x_{i}(0)-x_{j}(0)\in\U_{ij}\ \mbox{for all}\ (i,\,j)\\
&\Longleftrightarrow& x_{i}(t)-x_{j}(t)\in\U_{ij}\ \mbox{for all}\
(i,\,j)\ \mbox{and}\ t\\
&\Longleftrightarrow& L\xi(t)\equiv 0
\end{eqnarray*}
which completes the proof.
\end{proof}

\begin{lemma}\label{lem:three}
Let $\sigma\in\{1,\,2,\,\ldots,\,m\}$ and $L={\rm
lap}\,\Gamma(W_{ij})$. The graph $\Gamma(C_{ij}V_{\sigma})$ is not
connected if and only if there exists a vector
$\zeta\notin\setS_{n}$ satisfying $\zeta\in{\rm
range}\,[I_{q}\otimes V_{\sigma}]\cap{\rm null}\,L$.
\end{lemma}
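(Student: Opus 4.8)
The plan is to relate the $n_\sigma$-graph $\Gamma(C_{ij}V_\sigma)$ to $L={\rm lap}\,\Gamma(W_{ij})$ through the injective map $\Pi:=I_q\otimes V_\sigma$, and to read off connectivity with Lemma~\ref{lem:one}. Write $L_\sigma={\rm lap}\,\Gamma(C_{ij}V_\sigma)$. Applying Lemma~\ref{lem:one} to the $n_\sigma$-graph $\Gamma(C_{ij}V_\sigma)$, non-connectivity means ${\rm null}\,L_\sigma\neq\setS_{n_\sigma}$; since ${\rm null}\,L_\sigma\supset\setS_{n_\sigma}$ always holds, this is precisely the existence of some $\eta\notin\setS_{n_\sigma}$ with $L_\sigma\eta=0$. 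First I would show that $\eta\mapsto\zeta=\Pi\eta$ carries such $\eta$ bijectively onto the witnesses $\zeta$ demanded by the statement.

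The key step is a computation that exploits the eigenvector identity $AV_\sigma=\mu_\sigma V_\sigma$, hence $A^kV_\sigma=\mu_\sigma^kV_\sigma$. Stacking the blocks $C_{ij}A^kV_\sigma=\mu_\sigma^kC_{ij}V_\sigma$ shows that $W_{ij}V_\sigma$ is the block column whose $k$th block is $\mu_\sigma^kC_{ij}V_\sigma$, so that for every vector $v$ of compatible size
\[
\|W_{ij}V_\sigma v\|^2=\Big(\sum_{k=0}^{n-1}|\mu_\sigma|^{2k}\Big)\|C_{ij}V_\sigma v\|^2=c_\sigma\,\|C_{ij}V_\sigma v\|^2,
\]
where $c_\sigma:=\sum_{k=0}^{n-1}|\mu_\sigma|^{2k}\geq 1>0$. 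Writing $\zeta=\Pi\eta$ with blocks $\zeta_i=V_\sigma\eta_i$ and invoking the quadratic-form expression $\xi^*L\xi=\sum_{i>j}(x_i-x_j)^*G_{ij}(x_i-x_j)$ for the Laplacian yields
\[
\zeta^*L\zeta=\sum_{i>j}\|W_{ij}V_\sigma(\eta_i-\eta_j)\|^2=c_\sigma\sum_{i>j}\|C_{ij}V_\sigma(\eta_i-\eta_j)\|^2=c_\sigma\,\eta^*L_\sigma\eta.
\]

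Since $L$ and $L_\sigma$ are Hermitian positive semidefinite and $c_\sigma>0$, this identity gives $L\zeta=0\Leftrightarrow\zeta^*L\zeta=0\Leftrightarrow\eta^*L_\sigma\eta=0\Leftrightarrow L_\sigma\eta=0$; that is, $\Pi\eta\in{\rm null}\,L$ exactly when $\eta\in{\rm null}\,L_\sigma$. Moreover $\Pi$ is block diagonal with full-column-rank blocks $V_\sigma$, hence injective, so $\zeta=\Pi\eta\in\setS_n$ iff $V_\sigma\eta_i=V_\sigma\eta_j$ for all $(i,j)$ iff $\eta_i=\eta_j$ for all $(i,j)$ iff $\eta\in\setS_{n_\sigma}$, and each $\zeta\in{\rm range}\,\Pi$ equals $\Pi\eta$ for a unique $\eta$. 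Combining the two equivalences closes both directions: a witness $\eta\notin\setS_{n_\sigma}$ in ${\rm null}\,L_\sigma$ produces $\zeta=\Pi\eta\in{\rm range}\,\Pi\cap{\rm null}\,L$ with $\zeta\notin\setS_n$, and conversely any such $\zeta$ is $\Pi\eta$ for an $\eta\notin\setS_{n_\sigma}$ in ${\rm null}\,L_\sigma$. The only genuine obstacle is the collapse of $W_{ij}V_\sigma$ to the scalar multiple $c_\sigma$ of $C_{ij}V_\sigma$; once that positive constant is isolated, the matching of null spaces and of synchronization subspaces is automatic.
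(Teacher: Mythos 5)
Your proposal is correct and follows essentially the same route as the paper: the central identity $\zeta^{*}L\zeta=\bigl(1+|\mu_{\sigma}|^{2}+\cdots+|\mu_{\sigma}|^{2(n-1)}\bigr)\,\eta^{*}L_{\sigma}\eta$, obtained from $AV_{\sigma}=\mu_{\sigma}V_{\sigma}$ (the paper states it as $V_{\sigma}^{*}W_{ij}^{*}W_{ij}V_{\sigma}=c_{\sigma}V_{\sigma}^{*}C_{ij}^{*}C_{ij}V_{\sigma}$, equivalent to your norm version), combined with Hermitian positive semidefiniteness of both Laplacians, full column rank of $V_{\sigma}$, and Lemma~\ref{lem:one}. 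Your packaging of the two directions into a single chain of equivalences via the injective map $I_{q}\otimes V_{\sigma}$ is a modest streamlining of the paper's two-part argument, not a different method.
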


\begin{proof}
Given $\sigma$, let us suppose that $\Gamma(C_{ij}V_{\sigma})$ is
not connected. Since ${\rm
null}\,L_{\sigma}\supset\setS_{n_{\sigma}}$, by
Lemma~\ref{lem:one} there exists
$\eta\in(\Complex^{n_{\sigma}})^{q}$ that satisfies
$\eta\notin\setS_{n_{\sigma}}$ and $L_{\sigma}\eta=0$, where
$L_{\sigma}={\rm lap}\,\Gamma(C_{ij}V_{\sigma})$. Let us employ
the partition $\eta=[z_{1}^{T}\ z_{2}^{T}\ \cdots\ z_{q}^{T}]^{T}$
with $z_{i}\in\Complex^{n_{\sigma}}$ and define
$\zeta\in(\Complex^{n})^{q}$ as $\zeta=[I_{q}\otimes
V_{\sigma}]\eta$. That is, $\zeta=[(V_{\sigma}z_{1})^{T}\
(V_{\sigma}z_{2})^{T}\ \cdots\ (V_{\sigma}z_{q})^{T}]^{T}$.
Clearly, $\zeta\in{\rm range}\,[I_{q}\otimes V_{\sigma}]$. Since
$V_{\sigma}$ is full column rank, $\eta\notin\setS_{n_{\sigma}}$
yields $\zeta\notin\setS_{n}$. Lastly, we have to establish
$\zeta\in{\rm null}\,L$. Recall
$AV_{\sigma}=\mu_{\sigma}V_{\sigma}$. Therefore
\begin{eqnarray*}
V_{\sigma}^{*}W_{ij}^{*}W_{ij}V_{\sigma}
&=&V_{\sigma}^{*}\left(C_{ij}^{*}C_{ij}+A^{*}C_{ij}^{*}C_{ij}A+\cdots+A^{(n-1)*}C_{ij}^{*}C_{ij}A^{n-1}\right)V_{\sigma}\nonumber\\
&=&V_{\sigma}^{*}\left(C_{ij}^{*}C_{ij}+\mu_{\sigma}^{*}C_{ij}^{*}C_{ij}\mu_{\sigma}+\cdots+\mu_{\sigma}^{(n-1)*}C_{ij}^{*}C_{ij}\mu_{\sigma}^{n-1}\right)V_{\sigma}\nonumber\\
&=&\left(1+|\mu_{\sigma}|^2+\cdots+|\mu_{\sigma}|^{2(n-1)}\right)V_{\sigma}^{*}C_{ij}^{*}C_{ij}V_{\sigma}\,.
\end{eqnarray*}
Now we can proceed as
\begin{eqnarray*}
\zeta^{*}L\zeta
&=&\sum_{i>j}(V_{\sigma}z_{i}-V_{\sigma}z_{j})^{*}W_{ij}^{*}W_{ij}(V_{\sigma}z_{i}-V_{\sigma}z_{j})\\
&=&\sum_{i>j}(z_{i}-z_{j})^{*}V_{\sigma}^{*}W_{ij}^{*}W_{ij}V_{\sigma}(z_{i}-z_{j})\\
&=&\left(1+|\mu_{\sigma}|^2+\cdots+|\mu_{\sigma}|^{2(n-1)}\right)\sum_{i>j}(z_{i}-z_{j})^{*}V_{\sigma}^{*}C_{ij}^{*}C_{ij}V_{\sigma}(z_{i}-z_{j})\\
&=&\left(1+|\mu_{\sigma}|^2+\cdots+|\mu_{\sigma}|^{2(n-1)}\right)\eta^{*}L_{\sigma}\eta\\
&=&0\,.
\end{eqnarray*}
Since $L$ is Hermitian positive semidefinite, $\zeta^{*}L\zeta=0$
implies $L\zeta=0$, i.e., $\zeta\in{\rm null}\,L$.

To show the other direction, suppose that there exists
$\zeta\notin\setS_{n}$ satisfying $\zeta\in{\rm
range}\,[I_{q}\otimes V_{\sigma}]$ and $L\zeta=0$. Since
$\zeta\in{\rm range}\,[I_{q}\otimes V_{\sigma}]$ we can find
$\eta=[z_{1}^{T}\ z_{2}^{T}\ \cdots\ z_{q}^{T}]^{T}$ with
$z_{i}\in\Complex^{n_{\sigma}}$ satisfying $\zeta=[I_{q}\otimes
V_{\sigma}]\eta$. And since $V_{\sigma}$ is full column rank
$\zeta\notin\setS_{n}$ implies $\eta\notin\setS_{n_{\sigma}}$. Now
we turn the same wheels as in the first part, but in the opposite
direction.
\begin{eqnarray*}
\eta^{*}L_{\sigma}\eta&=&\sum_{i>j}(z_{i}-z_{j})^{*}V_{\sigma}^{*}C_{ij}^{*}C_{ij}V_{\sigma}(z_{i}-z_{j})\\
&=&\left(1+|\mu_{\sigma}|^2+\cdots+|\mu_{\sigma}|^{2(n-1)}\right)^{-1}\sum_{i>j}(z_{i}-z_{j})^{*}V_{\sigma}^{*}W_{ij}^{*}W_{ij}V_{\sigma}(z_{i}-z_{j})\\
&=&\left(1+|\mu_{\sigma}|^2+\cdots+|\mu_{\sigma}|^{2(n-1)}\right)^{-1}\sum_{i>j}(V_{\sigma}z_{i}-V_{\sigma}z_{j})^{*}W_{ij}^{*}W_{ij}(V_{\sigma}z_{i}-V_{\sigma}z_{j})\\
&=&\left(1+|\mu_{\sigma}|^2+\cdots+|\mu_{\sigma}|^{2(n-1)}\right)^{-1}\zeta^{*}L\zeta\\
&=&0\,.
\end{eqnarray*}
Since $L_{\sigma}$ is Hermitian positive semidefinite,
$\eta^{*}L_{\sigma}\eta=0$ implies $L_{\sigma}\eta=0$, i.e.,
$\eta\in{\rm null}\,L_{\sigma}$. This allows us to assert ${\rm
null}\,L_{\sigma}\neq\setS_{n_{\sigma}}$ because
$\eta\notin\setS_{n_{\sigma}}$. Then by Lemma~\ref{lem:one} the
graph $\Gamma(C_{ij}V_{\sigma})$ is not connected.
\end{proof}

\section{Observability and connectivity}

In this section we establish the equivalence between observability
and connectivity. Then we present a corollary on an interesting
special case followed by a relevant numerical example. We end the
section with a theorem on detectability. Below is our main result.

\begin{theorem}\label{thm:obs}
The following are equivalent.
\begin{enumerate}
\item The array $[(C_{ij}),\,A]$ is observable. \item The
interconnection graph $\Gamma(W_{ij})$ is connected. \item All the
eigengraphs
$\Gamma(C_{ij}V_{1}),\,\Gamma(C_{ij}V_{2}),\,\ldots,\,\Gamma(C_{ij}V_{m})$
are connected.
\end{enumerate}
\end{theorem}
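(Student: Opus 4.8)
The plan is to dispatch the equivalence $(1)\Leftrightarrow(2)$ together with the easy implication $(2)\Rightarrow(3)$ straight from the three lemmas, and then to concentrate the real effort on $(3)\Rightarrow(2)$. For $(1)\Leftrightarrow(2)$, observe that the solutions of $\dot x_i=Ax_i$ satisfy $x_i(t)\equiv x_j(t)$ exactly when $x_i(0)=x_j(0)$, since $x_i(t)-x_j(t)=e^{At}(x_i(0)-x_j(0))$. Hence the defining implication of observability reads: $L\xi(0)=0$ forces $\xi(0)\in\setS_n$, where $L={\rm lap}\,\Gamma(W_{ij})$. By Lemma~\ref{lem:two} this is precisely ${\rm null}\,L\subseteq\setS_n$, and since ${\rm null}\,L\supseteq\setS_n$ always holds, observability is equivalent to ${\rm null}\,L=\setS_n$, which by Lemma~\ref{lem:one} is the connectivity of $\Gamma(W_{ij})$.

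Next I would prove $(2)\Rightarrow(3)$ by contraposition. If some eigengraph $\Gamma(C_{ij}V_\sigma)$ is not connected, Lemma~\ref{lem:three} hands me a vector $\zeta\notin\setS_n$ lying in ${\rm null}\,L$; discarding the range constraint, this already shows ${\rm null}\,L\neq\setS_n$, so $\Gamma(W_{ij})$ is not connected by Lemma~\ref{lem:one}. The substance of the theorem is the converse $(3)\Rightarrow(2)$, again by contraposition: assuming $\Gamma(W_{ij})$ is not connected, Lemma~\ref{lem:one} produces $\zeta=[x_1^T\ \cdots\ x_q^T]^T\notin\setS_n$ with $L\zeta=0$, equivalently $x_i-x_j\in\U_{ij}$ for every $(i,j)$, and from it I must manufacture a witness of the form required by Lemma~\ref{lem:three} for some $\sigma$.

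To do so I would first project onto generalized eigenspaces. Writing $\Complex^n=\bigoplus_\sigma\G_\sigma$ for the generalized eigenspace decomposition of $A$, each spectral projector $P_\sigma$ is a polynomial in $A$ and therefore maps every $A$-invariant $\U_{ij}$ into itself. Applying $I_q\otimes P_\sigma$ to $\zeta$ thus keeps the differences inside the $\U_{ij}$, i.e.\ keeps the projected vector in ${\rm null}\,L$; and because $\zeta\notin\setS_n$, at least one projection $\zeta^{(\sigma)}$ is itself not in $\setS_n$. This reduces the problem to a single generalized eigenspace, with all components $x_i^{(\sigma)}\in\G_\sigma$ and all differences in $\U_{ij}\cap\G_\sigma$.

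The final and most delicate step is to descend from generalized eigenvectors to genuine ones, and this is the main obstacle, because when $A$ is not diagonalizable the columns of $V_\sigma$ need not span $\G_\sigma$. Let $N=(A-\mu_\sigma I)|_{\G_\sigma}$, a nilpotent operator leaving each $\U_{ij}\cap\G_\sigma$ invariant, and let $r$ be the largest integer for which $N^r(x_i^{(\sigma)}-x_j^{(\sigma)})\neq 0$ for some pair; such $r$ exists since the components are not all equal and $N$ is nilpotent. Setting $\hat x_i=N^r(x_i^{(\sigma)}-x_1^{(\sigma)})$, every difference $\hat x_i-\hat x_j=N^r(x_i^{(\sigma)}-x_j^{(\sigma)})$ is annihilated by $N$ through maximality of $r$, hence lies in ${\rm null}\,N={\rm range}\,V_\sigma$, and still lies in $\U_{ij}$ by invariance; moreover each $\hat x_i$ equals the difference $\hat x_i-\hat x_1$ and so lies in ${\rm range}\,V_\sigma$, while $\hat x_i\neq\hat x_j$ for the maximizing pair forces $\hat\zeta\notin\setS_n$. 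Thus $\hat\zeta\in{\rm range}\,[I_q\otimes V_\sigma]\cap{\rm null}\,L$ with $\hat\zeta\notin\setS_n$, and Lemma~\ref{lem:three} declares $\Gamma(C_{ij}V_\sigma)$ not connected, completing the contrapositive. I expect the choice of the power $r$ — taken globally over all pairs rather than pair by pair — to be the one point demanding care, since it is exactly what simultaneously guarantees that the surviving witness is nonzero and that every component lands in the true eigenspace.
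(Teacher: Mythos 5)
Your proof is correct, and on the hard implication it takes a genuinely different route from the paper's. The easy parts agree in substance: the paper also gets $1\Leftrightarrow 2$ from Lemmas~\ref{lem:one} and \ref{lem:two} (though it only states $1\Rightarrow 2$ and recovers the converse through the cycle $2\Rightarrow 3\Rightarrow 1$, whereas you prove the equivalence directly, which is cleaner), and its $2\Rightarrow 3$ is exactly your application of Lemma~\ref{lem:three}. For the hard step the paper proves $3\Rightarrow 1$, but like you it immediately reduces to a vector $\zeta\in{\rm null}\,L\setminus\setS_{n}$; the two proofs part ways in how they descend from such a $\zeta$ to a witness in ${\rm range}\,[I_{q}\otimes V_{\sigma}]\cap{\rm null}\,L$ outside $\setS_{n}$. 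The paper works concretely with the $[I_{q}\otimes A]$-invariance of ${\rm null}\,L$: it chooses full-column-rank $S,\,U$ with ${\rm range}\,S=\setS_{n}$ and ${\rm range}\,[S\ U]={\rm null}\,L$, writes $[I_{q}\otimes A]U=S\Omega+U\Lambda$, extracts an eigenvector of $\Lambda$ (in effect an eigenvector of the map induced on the quotient ${\rm null}\,L/\setS_{n}$), and then subtracts a synchronized copy of one block to kill the $\setS_{n}$-component. You instead use the primary decomposition: since each spectral projector $P_{\sigma}$ is a polynomial in $A$ it preserves every $\U_{ij}$, so projecting stays in ${\rm null}\,L$ and isolates one generalized eigenspace $\G_{\sigma}$, after which the globally maximal power $r$ of the nilpotent $N=(A-\mu_{\sigma}I)|_{\G_{\sigma}}$ lands all components in ${\rm null}\,N={\rm range}\,V_{\sigma}$; your baseline shift by $x_{1}^{(\sigma)}$ plays the role of the paper's $\zeta_{3}$-subtraction. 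I checked the points you flagged: $r$ is well defined ($r=0$ qualifies because $\zeta^{(\sigma)}\notin\setS_{n}$, and nilpotency bounds it from above), taking $r$ maximal over all pairs at once ensures every difference is annihilated by one further application of $N$ while the maximizing pair keeps $\hat\zeta\notin\setS_{n}$, and $(A-\mu_{\sigma}I)^{r}P_{\sigma}$ is a polynomial in $A$, so all differences remain in the $A$-invariant $\U_{ij}$ and the computation of Lemma~\ref{lem:two} gives $L\hat\zeta=0$. What each approach buys: yours makes the eigenstructure explicit and avoids the ad hoc matrices $S,\,U,\,\Omega,\,\Lambda$, at the cost of importing spectral projectors and a nilpotent filtration; the paper's stays entirely elementary, needing only the invariance of ${\rm null}\,L$ and basic eigenvector arithmetic, at the cost of a less transparent extraction step.
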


\begin{proof}
{\em 1$\implies$2.} Suppose that $\Gamma(W_{ij})$ is not
connected. Hence ${\rm null}\,L\neq\setS_{n}$ by
Lemma~\ref{lem:one}, where $L={\rm lap}\,\Gamma(W_{ij})$. Since
${\rm null}\,L\supset\setS_{n}$ there must exist a vector
$\zeta\in(\Complex^{n})^{q}$ that satisfies both
$\zeta\notin\setS_{n}$ and $L\zeta=0$. Choose the initial
conditions $x_{i}(0)$ of the systems~\eqref{eqn:array} so as to
satisfy $[x_{1}(0)^{T}\ x_{2}(0)^{T}\ \cdots\
x_{q}(0)^{T}]^{T}=\zeta$. Then by Lemma~\ref{lem:two} we have
$y_{ij}(t)\equiv 0$ for all $(i,\,j)$. However there exists at
least one pair $(k,\,\ell)$ for which $x_{k}(t)\not\equiv
x_{\ell}(t)$ because $[x_{1}(0)^{T}\ x_{2}(0)^{T}\ \cdots\
x_{q}(0)^{T}]^{T}\notin\setS_{n}$. Hence the array
$[(C_{ij}),\,A]$ cannot be observable.

{\em 2$\implies$3.} Suppose that $\Gamma(C_{ij}V_{\sigma})$ is not
connected for some $\sigma\in\{1,\,2,\,\ldots,\,m\}$. Then by
Lemma~\ref{lem:three} there exists $\zeta\in(\Complex^{n})^{q}$
that satisfies $\zeta\notin\setS_{n}$ and $L\zeta=0$. That is,
${\rm null}\,L\neq\setS_{n}$. This implies by Lemma~\ref{lem:one}
that $\Gamma(W_{ij})$ is not connected.

{\em 3$\implies$1.} Suppose that the array $[(C_{ij}),\,A]$ is not
observable. Then we can find some initial conditions
$x_{1}(0),\,x_{2}(0),\,\ldots,\,x_{q}(0)$ for which the solutions
$x_{i}(t)$ of the systems~\eqref{eqn:array} yield
\begin{eqnarray*}
y_{ij}(t)\equiv 0\ \mbox{for all}\ (i,\,j)\ \mbox{and}\
x_{k}(t)\not\equiv x_{\ell}(t)\ \mbox{for some}\ (k,\,\ell)\,.
\end{eqnarray*}
Let $\xi(t)=[x_{1}(t)^{T}\ x_{2}(t)^{T}\ \cdots\
x_{q}(t)^{T}]^{T}$. By Lemma~\ref{lem:two} we have $L\xi(0)=0$
because $y_{ij}(t)\equiv 0$ for all $(i,\,j)$. We also have
$x_{k}(0)\neq x_{\ell}(0)$ because $x_{k}(t)\not\equiv
x_{\ell}(t)$. Hence $\xi(0)\notin\setS_{n}$. Combining $L\xi(0)=0$
and $\xi(0)\notin\setS_{n}$ (in the light of ${\rm
null}\,L\supset\setS_{n}$) implies that ${\rm null}\,L$ is a
strict superset of $\setS_{n}$. Let ${\rm dim}\,{\rm
null}\,L=n+p$. (Note that $p\geq 1$.) Let
$S\in\Complex^{(nq)\times n}$ and $U\in\Complex^{(nq)\times p}$ be
two full column rank matrices satisfying ${\rm
range}\,S=\setS_{n}$ and ${\rm range}\,[S\ \,U]={\rm null}\,L$.
Recall that the unobservable subspaces $\U_{ij}={\rm
null}\,W_{ij}$ are invariant with respect to the matrix $A$. As a
consequence ${\rm null}\,L$ is invariant with respect to the
matrix $[I_{q}\otimes A]$. To see that let $\zeta=[\rho_{1}^{T}\
\rho_{2}^{T}\ \cdots\ \rho_{q}^{T}]^{T}$ with
$\rho_{i}\in\Complex^{n}$. We can write
\begin{eqnarray*}
\zeta\in{\rm null}\,L &\implies& \sum_{i>j}\|W_{ij}(\rho_{i}-\rho_{j})\|^2=\zeta^{*}L\zeta=0\\
&\implies&(\rho_{i}-\rho_{j})\in{\rm null}\,W_{ij}\ \mbox{for all}\ (i,\,j)\\
&\implies&A(\rho_{i}-\rho_{j})\in{\rm null}\,W_{ij}\ \mbox{for all}\ (i,\,j)\\
&\implies&\sum_{i>j}\|W_{ij}(A\rho_{i}-A\rho_{j})\|^2=\zeta^{*}[I_{q}\otimes
A]^{*}L[I_{q}\otimes A]\zeta=0\\
&\implies&[I_{q}\otimes A]\zeta\in{\rm null}\,L
\end{eqnarray*}
where for the last implication we use the fact that $L$ is
Hermitian positive semidefinite. Now, due to invariance, there
have to exist matrices $\Omega\in\Complex^{n\times p}$ and
$\Lambda\in\Complex^{p\times p}$ that satisfy
\begin{eqnarray*}
[I_{q}\otimes A]U=S\Omega+U\Lambda\,.
\end{eqnarray*}
Let $f\in\Complex^{p}$ be an eigenvector of $\Lambda$ with
eigenvalue $\lambda\in\Complex$, i.e., $\Lambda f=\lambda f$.
Also, let $\zeta_{1}=Uf$ and $\zeta_{2}=S\Omega f$. Note that
$\zeta_{1}\notin\setS_{n}$ and $\zeta_{2}\in\setS_{n}$. Now we can
write
\begin{eqnarray*}
\left([I_{q}\otimes A]-\lambda I_{nq}\right)\zeta_{1}
&=&[I_{q}\otimes A]Uf-\lambda Uf\\
&=&[I_{q}\otimes A]Uf-U\Lambda f\\
&=&\left([I_{q}\otimes A]U-U\Lambda\right)f\\
&=&S\Omega f\\
&=&\zeta_{2}\,.
\end{eqnarray*}
Let us employ the partitions $\zeta_{1}=[{\tilde \rho}_{1}^{T}\
{\tilde \rho}_{2}^{T}\ \cdots\ {\tilde \rho}_{q}^{T}]^{T}$ with
${\tilde \rho}_{i}\in\Complex^{n}$ and $\zeta_{2}=[{\bar
\rho}^{T}\ {\bar \rho}^{T}\ \cdots\ {\bar \rho}^{T}]^{T}$ with
${\bar \rho}\in\Complex^{n}$. Then $\left([I_{q}\otimes A]-\lambda
I_{nq}\right)\zeta_{1}=\zeta_{2}$ yields $(A-\lambda I_{n}){\tilde
\rho}_{i}={\bar \rho}$ for all $i\in\{1,\,2,\,\ldots,\,q\}$.
Choose an arbitrary index $a\in\{1,\,2,\,\ldots,\,q\}$ and define
$\zeta_{3}=[{\tilde \rho}_{a}^{T}\ {\tilde \rho}_{a}^{T}\ \cdots\
{\tilde \rho}_{a}^{T}]^{T}$ and $\zeta_{4}=\zeta_{1}-\zeta_{3}$.
Note that $\zeta_{3}\in\setS_{n}$ and $\zeta_{4}\notin\setS_{n}$.
Moreover, since both $\zeta_{1}$ and $\zeta_{3}$ belong to ${\rm
null}\,L$, we have $L\zeta_{4}=0$. Now observe
\begin{eqnarray*}
\left([I_{q}\otimes A]-\lambda
I_{nq}\right)\zeta_{4}&=&\left([I_{q}\otimes
A]-\lambda I_{nq}\right)(\zeta_{1}-\zeta_{3})\\
&=&\zeta_{2}-\left([I_{q}\otimes
A]-\lambda I_{nq}\right)\zeta_{3}\\
&=&\left[\begin{array}{c}{\bar \rho}\\\vdots\\{\bar
\rho}\end{array}\right]-\left[\begin{array}{c}(A-\lambda
I_{n}){\tilde \rho}_{a}\\ \vdots\\ (A-\lambda I_{n}){\tilde \rho}_{a}\end{array}\right]\\
&=&0\,.
\end{eqnarray*}
Let $[{\hat\rho}_{1}^{T}\ {\hat\rho}_{2}^{T}\ \cdots\
{\hat\rho}_{q}^{T}]^{T}=\zeta_{4}$. We can write
\begin{eqnarray*}
\left[\begin{array}{c}(A-\lambda I_{n}){\hat\rho}_{1}\\
\vdots\\ (A-\lambda
I_{n}){\hat\rho}_{q}\end{array}\right]=\left([I_{q}\otimes
A]-\lambda I_{nq}\right)\zeta_{4}=0\,.
\end{eqnarray*}
Therefore $\lambda=\mu_{\sigma}$ for some
$\sigma\in\{1,\,2,\,\ldots,\,m\}$ and every nonzero
${\hat\rho}_{i}$ is an eigenvector of $A$. In particular, for each
${\hat\rho}_{i}$ there uniquely exists
$z_{i}\in\Complex^{n_{\sigma}}$ such that
${\hat\rho}_{i}=V_{\sigma}z_{i}$. By stacking these $z_{i}$ into
$\eta=[z_{1}^{T}\ z_{2}^{T}\ \cdots\ z_{q}^{T}]^{T}$ we have
$\zeta_{4}=[I_{q}\otimes V_{\sigma}]\eta$. Recall that we have
already obtained $\zeta_{4}\notin\setS_{n}$ and $L\zeta_{4}=0$.
Hence Lemma~\ref{lem:three} assures us that the eigengraph
$\Gamma(C_{ij}V_{\sigma})$ is not connected.
\end{proof}

\vspace{0.12in}

Theorem~\ref{thm:obs} has an interesting implication concerning
$1$-graphs. Let $d_{A}(s)$ and $m_{A}(s)$ respectively denote the
characteristic polynomial and the minimal polynomial of the matrix
$A$. Note that if each $V_{\sigma}$ consists of a single column,
i.e., each eigenvalue $\mu_{\sigma}$ has a unique (up to a
scaling) eigenvector, then all the eigengraphs
$\Gamma(C_{ij}V_{1}),\,\Gamma(C_{ij}V_{2}),\,\ldots,\,\Gamma(C_{ij}V_{m})$
become $1$-graphs. A sufficient condition for this is that the
eigenvalues $\mu_{1},\,\mu_{2},\,\ldots,\,\mu_{m}$ are all simple,
i.e., $m=n$. More generally:

\begin{corollary}\label{cor:obs}
Suppose $m_{A}(s)=d_{A}(s)$. Then the array $[(C_{ij}),\,A]$ is
observable if and only if all the $1$-graphs
$\Gamma(C_{ij}V_{1}),\,\Gamma(C_{ij}V_{2}),\,\ldots,\,\Gamma(C_{ij}V_{m})$
are connected.
\end{corollary}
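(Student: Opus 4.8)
The plan is to show that the hypothesis $m_{A}(s)=d_{A}(s)$ forces every eigengraph to be a $1$-graph, after which the corollary reduces immediately to the equivalence of items~1 and~3 in Theorem~\ref{thm:obs}.

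First I would translate the polynomial condition into a statement about the geometric multiplicities of the eigenvalues of $A$. Recall that $n_{\sigma}={\rm dim}\,{\rm null}\,[A-\mu_{\sigma}I_{n}]$ is precisely the geometric multiplicity of $\mu_{\sigma}$, while the exponent of $(s-\mu_{\sigma})$ in the minimal polynomial $m_{A}(s)$ is the size of the largest Jordan block attached to $\mu_{\sigma}$ and the exponent in the characteristic polynomial $d_{A}(s)$ is the algebraic multiplicity, i.e., the total size of all Jordan blocks attached to $\mu_{\sigma}$. Since the largest block size can equal the total block size only when there is a single Jordan block, the condition $m_{A}(s)=d_{A}(s)$ holds if and only if each eigenvalue of $A$ carries exactly one Jordan block, i.e., $A$ is nonderogatory. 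One Jordan block per eigenvalue is in turn equivalent to every eigenvalue having geometric multiplicity one, so $m_{A}(s)=d_{A}(s)$ if and only if $n_{\sigma}=1$ for all $\sigma\in\{1,\,2,\,\ldots,\,m\}$.

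Second, I would read off the graph-theoretic consequence. If $n_{\sigma}=1$, then each $V_{\sigma}\in\Complex^{n\times n_{\sigma}}$ is a single column, the products $C_{ij}V_{\sigma}$ are column vectors, and the weights $w(v_{i},\,v_{j})=(C_{ij}V_{\sigma})^{*}(C_{ij}V_{\sigma})$ of the eigengraph $\Gamma(C_{ij}V_{\sigma})$ are scalars. Hence every eigengraph is a $1$-graph, exactly as the paragraph preceding the corollary anticipates for the special case of simple eigenvalues.

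Finally, with all eigengraphs being $1$-graphs, the $1$-graphs $\Gamma(C_{ij}V_{1}),\,\ldots,\,\Gamma(C_{ij}V_{m})$ named in the corollary are exactly the eigengraphs appearing in item~3 of Theorem~\ref{thm:obs}. The equivalence of items~1 and~3 already established there then gives at once that the array $[(C_{ij}),\,A]$ is observable if and only if all of them are connected. I do not expect a genuine obstacle: the single point needing care is the standard linear-algebra equivalence between $m_{A}(s)=d_{A}(s)$ and every eigenvalue of $A$ having geometric multiplicity one, which is precisely the first step; everything after it is a direct appeal to Theorem~\ref{thm:obs}.
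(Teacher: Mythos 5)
Your proposal is correct and matches the paper's intended argument: the paper states the corollary as an immediate consequence of Theorem~\ref{thm:obs} after observing (in the paragraph preceding it) that each $V_{\sigma}$ having a single column makes every eigengraph a $1$-graph, which is exactly your reduction. Your only added content --- the standard equivalence between $m_{A}(s)=d_{A}(s)$, $A$ being nonderogatory, and every eigenvalue having geometric multiplicity one, so that $n_{\sigma}=1$ for all $\sigma$ --- is the precise justification the paper leaves implicit, and it is carried out correctly.
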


{\em An example.} Consider the array $[(C_{ij})_{i,j=1}^{4},\,A]$
with
\begin{eqnarray*}
A=\left[\begin{array}{rrrrrr}
 0 &    1 &   -7 &  -14 &   21 &   31\\
 1 &    1 &    1 &    3 &   -7 &  -11\\
 3 &    6 &  -28 &  -43 &    7 &    5\\
-2 &   -4 &   18 &   28 &   -7 &   -7\\
-2 &   -4 &   -2 &    1 &  -32 &  -49\\
 1 &    2 &    3 &    2 &   20 &   31
\end{array}\right]
\end{eqnarray*}
and
\begin{eqnarray*}
\begin{array}{rrrrrrrr}
C_{12}&=&\big[\ 2&3&8&12&6&10\ \big]\,,\\
&&&&&&&\\
C_{34}&=&\big[\ 4&6&6&10&6&9\ \big]\,,
\end{array}\qquad
\begin{array}{rrrrrrrr}
C_{23}&=&\big[\ 2&3&4&6&6&9\ \big]\,,\\
&&&&&&&\\
C_{41}&=&\big[\ 1&2&6&9&4&7\ \big]\,.\end{array}
\end{eqnarray*}
The matrices $C_{13}$ and $C_{24}$ are zero. (Recall
$C_{ij}=C_{ji}$ and $C_{ii}=0$.) The characteristic polynomial of
$A$ reads $d_{A}(s)=s^{6}-s^{2}=s^{2}(s-1)(s+1)(s-j)(s+j)$. That
is, $A$ has $m=5$ distinct eigenvalues:
$\mu_{1}=0,\,\mu_{2}=1,\,\mu_{3}=-1,\,\mu_{4,5}=\pm j$. The
eigenvalue at the origin is repeated, yet ${\rm dim}\,{\rm
null}\,A=1$. Therefore there is a single eigenvector corresponding
to $\mu_{1}$. Consequently we have $m_{A}(s)=d_{A}(s)$. The
matrices (or, in this case, vectors) $V_{\sigma}$ corresponding to
the eigenvalues $\mu_{\sigma}$ are given below.
\begin{eqnarray*}
V_{1}=\left[
\begin{array}{r}
-5\\ 2\\ -4\\ 3\\ 5\\ -3
\end{array}\right]\,,\quad
V_{2}=\left[
\begin{array}{r}
1\\ -1\\ -5\\ 3\\ -4\\ 3
\end{array}\right]\,,\quad
V_{3}=\left[
\begin{array}{r}
-2\\ 1\\ 2\\ -1\\ 3\\ -2
\end{array}\right]\,,\quad
V_{4,5}=\left[
\begin{array}{r}
-17\\ 4\\ -19\\ 14\\ 22\\ -13
\end{array}\right]\pm j\left[
\begin{array}{r}
0\\ 1\\ 8\\ -5\\ -3\\ 1
\end{array}\right]\,.
\end{eqnarray*}
Now let us determine whether the array $[(C_{ij}),\,A]$ is
observable or not. Thanks to Corollary~\ref{cor:obs} we can do
this by checking the connectivities of the $1$-graphs
$\Gamma(C_{ij}V_{1}),\,\Gamma(C_{ij}V_{2}),\,\ldots,\,\Gamma(C_{ij}V_{5})$.
A pleasant thing about a $1$-graph is that its connectivity can be
read from its visual representation. In this universal picture
every vertex is represented by a dot and a line (called an edge)
is drawn connecting a pair of vertices $(v_{i},\,v_{j})$ if the
value of the weight function (which is scalar for a 1-graph) is
positive, i.e., $w(v_{i},\,v_{j})>0$. Then the graph is connected
if we can reach from any dot to any other dot by tracing the
lines. Our 1-graphs are shown in Fig.~\ref{fig:onegraphs}.
Clearly, all of them are connected. Hence we conclude that the
array $[(C_{ij}),\,A]$ is observable.
\begin{figure}[h]
\begin{center}
\includegraphics[scale=0.55]{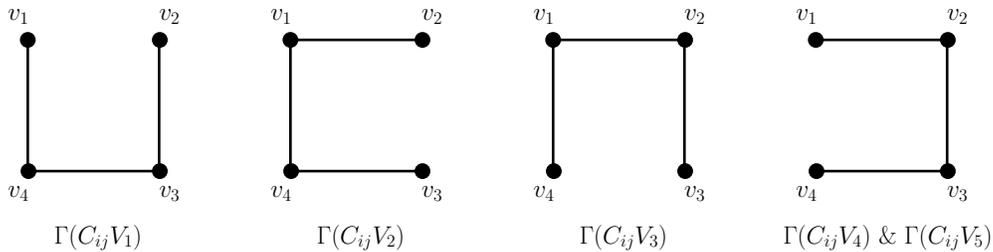}
\caption{Connected eigengraphs.}\label{fig:onegraphs}
\end{center}
\end{figure}
Note that for some nonzero $C_{ij}$ certain edges
$(v_{i},\,v_{j})$ are missing. For instance, for the graph
$\Gamma(C_{ij}V_{1})$ the edge $(v_{1},\,v_{2})$ is absent despite
$C_{12}\neq 0$. The reason is that $\mu_{1}=0$ is an unobservable
eigenvalue for the pair $[C_{12},\,A]$. In particular, we have
$C_{12}V_{1}=0$, i.e., the eigenvector $V_{1}$ belongs to the null
space of $C_{12}$. Interestingly, the intersection of the graphs
in Fig.~\ref{fig:onegraphs} yields an empty set of edges. This is
because there is not a single pair $[C_{ij},\,A]$ that is
observable. Still, that does not prevent the array
$[(C_{ij}),\,A]$ from being observable.

A few words on a practical issue. As is well known, sometimes the
system designer may have to settle upon less than observability,
where $y_{ij}(t)\equiv 0$ need not imply the desired
$x_{i}(t)\equiv x_{j}(t)$, but rather guarantees only
$\|x_{i}(t)-x_{j}(t)\|\to 0$ as $t\to\infty$. This suggests to
slacken Definition~\ref{def:obs} a bit.
\begin{definition}\label{def:detect}
An array $[(C_{ij}),\,A]$ is said to be {\em detectable} if
\begin{eqnarray*}
y_{ij}(t)\equiv 0\ \mbox{for all}\ (i,\,j) \implies
\|x_{i}(t)-x_{j}(t)\|\to 0\ \mbox{for all}\ (i,\,j)
\end{eqnarray*}
for all initial conditions
$x_{1}(0),\,x_{2}(0),\,\ldots,\,x_{q}(0)$.
\end{definition}

Since (for a continuous-time system) the terms in the solution
related to the eigenvalues of $A$ that are on the open left
half-plane will die out eventually, detectability is assured if
the eigenvalues on the closed right half-plane are observable.
More formally:

\begin{theorem}\label{thm:det}
The array $[(C_{ij}),\,A]$ is detectable if and only if all the
eigengraphs $\Gamma(C_{ij}V_{\sigma})$ with ${\rm
Re}\,\mu_{\sigma}\geq 0$ are connected.
\end{theorem}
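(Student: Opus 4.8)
The plan is to prove both directions by tracking a single mode and reducing everything to the eigenvector-level statement of Lemma~\ref{lem:three}. Throughout I write $L=\mathrm{lap}\,\Gamma(W_{ij})$, and for each distinct eigenvalue $\mu_\sigma$ I let $E_\sigma\subset\Complex^n$ be the corresponding generalized eigenspace of $A$, so that $\Complex^n=\bigoplus_\sigma E_\sigma$ and $(E_\sigma)^q$ is exactly the generalized eigenspace of $[I_q\otimes A]$ for $\mu_\sigma$. The recurring elementary fact is that for $v\in E_\sigma$ one has $e^{At}v=e^{\mu_\sigma t}e^{(A-\mu_\sigma I_n)t}v$; since $(A-\mu_\sigma I_n)|_{E_\sigma}$ is nilpotent, $\|e^{At}v\|\to 0$ forces $v=0$ whenever $\mathrm{Re}\,\mu_\sigma\geq 0$ (the factor $|e^{\mu_\sigma t}|\geq 1$ cannot overcome a polynomially nondecaying $e^{(A-\mu_\sigma I_n)t}v$), whereas $\|e^{At}v\|\to 0$ automatically when $\mathrm{Re}\,\mu_\sigma<0$.

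For the \emph{only if} direction I argue by contraposition. Suppose some eigengraph $\Gamma(C_{ij}V_{\sigma})$ with $\mathrm{Re}\,\mu_\sigma\geq 0$ is not connected. Lemma~\ref{lem:three} then supplies $\zeta\notin\setS_n$ lying in $\mathrm{range}\,[I_q\otimes V_\sigma]\cap\mathrm{null}\,L$. Because each block of $\zeta$ sits in $\mathrm{range}\,V_\sigma=\mathrm{null}\,[A-\mu_\sigma I_n]$, the vector $\zeta$ is an eigenvector of $[I_q\otimes A]$ with eigenvalue $\mu_\sigma$. Initializing the array~\eqref{eqn:array} at $\xi(0)=\zeta$ gives $y_{ij}(t)\equiv 0$ by Lemma~\ref{lem:two}, while $x_k(t)-x_\ell(t)=e^{\mu_\sigma t}(x_k(0)-x_\ell(0))$ for every pair. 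Since $\zeta\notin\setS_n$, some pair $(k,\ell)$ has $x_k(0)\neq x_\ell(0)$, whence $\|x_k(t)-x_\ell(t)\|=e^{\mathrm{Re}\,\mu_\sigma\,t}\|x_k(0)-x_\ell(0)\|\geq\|x_k(0)-x_\ell(0)\|>0$ does not tend to zero, so the array is not detectable.

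For the \emph{if} direction I again argue by contraposition, assuming non-detectability and locating a disconnected eigengraph with $\mathrm{Re}\,\mu_\sigma\geq 0$. Non-detectability yields $\xi(0)\in\mathrm{null}\,L$ (equivalently $y_{ij}\equiv 0$ by Lemma~\ref{lem:two}) with $\|x_k(t)-x_\ell(t)\|\not\to 0$ for some pair. Decomposing $\xi(0)=\sum_\sigma\xi_\sigma$ along the generalized eigenspaces and using that $\mathrm{null}\,L$ is $[I_q\otimes A]$-invariant (established inside the proof of Theorem~\ref{thm:obs}), each component obeys $\xi_\sigma\in N_\sigma:=\mathrm{null}\,L\cap(E_\sigma)^q$. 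Because distinct $E_\sigma$ are linearly independent, the relative solution decays iff it decays in every mode; combined with the elementary fact above, $\|x_k(t)-x_\ell(t)\|\not\to 0$ forces $\xi_{\sigma_0}\notin\setS_n$ for some $\sigma_0$ with $\mathrm{Re}\,\mu_{\sigma_0}\geq 0$.

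Converting this nonsynchronized generalized-eigenspace vector into an honest eigenvector witness is the step I expect to be the main obstacle: Lemma~\ref{lem:three} speaks only about $\mathrm{range}\,[I_q\otimes V_{\sigma_0}]$, i.e.\ genuine eigenvectors, whereas $N_{\sigma_0}$ may contain generalized eigenvectors when $\mu_{\sigma_0}$ is not semisimple. To bridge this I repeat the reduction from the proof of Theorem~\ref{thm:obs}, now restricted to the invariant subspace $N_{\sigma_0}$ together with its invariant subspace $\setS_n\cap(E_{\sigma_0})^q$. Picking full-column-rank $S',U'$ with $\mathrm{range}\,S'=\setS_n\cap(E_{\sigma_0})^q$ and $\mathrm{range}\,[S'\ \,U']=N_{\sigma_0}$, invariance gives $[I_q\otimes A]U'=S'\Omega'+U'\Lambda'$ with $\Lambda'$ carrying only the eigenvalue $\mu_{\sigma_0}$; choosing an eigenvector $f$ of $\Lambda'$ and setting $\zeta_1=U'f\notin\setS_n$, the same computation produces $([I_q\otimes A]-\mu_{\sigma_0}I_{nq})\zeta_1\in\setS_n$, and the subtraction trick (forming $\zeta_4=\zeta_1-\zeta_3$ with a synchronized $\zeta_3$) yields $\zeta_4\in\mathrm{null}\,L$, $\zeta_4\notin\setS_n$, and $([I_q\otimes A]-\mu_{\sigma_0}I_{nq})\zeta_4=0$. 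Thus $\zeta_4$ is a genuine eigenvector, so $\zeta_4\in\mathrm{range}\,[I_q\otimes V_{\sigma_0}]$, and Lemma~\ref{lem:three} declares $\Gamma(C_{ij}V_{\sigma_0})$ disconnected, contradicting connectivity of all eigengraphs with $\mathrm{Re}\,\mu_\sigma\geq 0$.
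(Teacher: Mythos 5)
Your proposal is correct, and it splits cleanly into one direction that matches the paper and one that does not. Your \emph{only if} direction is essentially identical to the paper's: invoke Lemma~\ref{lem:three} to get $\zeta=[I_{q}\otimes V_{\sigma}]\eta$ with $L\zeta=0$ and $\zeta\notin\setS_{n}$, initialize the array at $\zeta$ so that every $x_{i}(t)=x_{i}(0)e^{\mu_{\sigma}t}$, and observe $\|x_{k}(t)-x_{\ell}(t)\|=|e^{\mu_{\sigma}t}|\cdot\|x_{k}(0)-x_{\ell}(0)\|\not\to 0$ since ${\rm Re}\,\mu_{\sigma}\geq 0$. In the \emph{if} direction you take a genuinely different route. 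The paper works in the time domain: it expands the solution as $\xi(t)=\sum_{\sigma}p_{\sigma}(t)e^{\mu_{\sigma}t}$, uses linear independence of the functions $t\mapsto t^{\nu}e^{\mu_{\sigma}t}$ to conclude $L\vartheta_{\nu}=0$ for every polynomial coefficient, reads off the chain $([I_{q}\otimes A]-\mu_{\sigma}I_{nq})\vartheta_{\nu}=(\nu+1)\vartheta_{\nu+1}$, and pivots at an index $\nu$ with $\vartheta_{\nu}\notin\setS_{n}$ and $([I_{q}\otimes A]-\mu_{\sigma}I_{nq})\vartheta_{\nu}\in\setS_{n}$. You instead work in state space: you split $\xi(0)$ along the generalized eigenspaces, push each spectral component into ${\rm null}\,L$ by invariance, isolate a nonsynchronized component $\xi_{\sigma_{0}}$ with ${\rm Re}\,\mu_{\sigma_{0}}\geq 0$, and then rerun the $S,U,\Omega,\Lambda$ reduction from the proof of Theorem~\ref{thm:obs} inside $N_{\sigma_{0}}$, where the quotient operator $\Lambda'$ automatically carries the sole eigenvalue $\mu_{\sigma_{0}}$; both arguments then funnel into the identical subtraction trick ($\zeta_{4}=\zeta_{1}-\zeta_{3}$) and Lemma~\ref{lem:three}. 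What your route buys: it avoids the quasi-polynomial bookkeeping and makes the passage from generalized eigenvector to honest eigenvector structurally transparent, as it is literally Theorem~\ref{thm:obs}'s argument localized to one spectral subspace. What the paper's route buys: the explicit chain $\vartheta_{0},\ldots,\vartheta_{r}$ with $L\vartheta_{\nu}=0$ is recycled verbatim in the proof of Theorem~\ref{thm:PBH}, so the time-domain construction has downstream value there. Two routine steps you leave implicit and should state: (i) $\xi_{\sigma}\in{\rm null}\,L$ requires the standard fact that the spectral projection onto $(E_{\sigma})^{q}$ is a polynomial in $[I_{q}\otimes A]$, so invariance of ${\rm null}\,L$ under the operator transfers to invariance under the projections; (ii) $U'$ is nonempty (because $\xi_{\sigma_{0}}\in N_{\sigma_{0}}\setminus\setS_{n}$) and $\zeta_{1}=U'f\notin\setS_{n}$ because $\zeta_{1}\in(E_{\sigma_{0}})^{q}$, so $\zeta_{1}\in\setS_{n}$ would place it in ${\rm range}\,S'=\setS_{n}\cap(E_{\sigma_{0}})^{q}$, contradicting the full column rank of $[S'\ \,U']$. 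Both points are true, so these are presentational gaps only, not mathematical ones.
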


\begin{proof}
{\em Con.$\implies$Det.} Suppose that $[(C_{ij}),\,A]$ is not
detectable. Then we can find some initial conditions
$x_{1}(0),\,x_{2}(0),\,\ldots,\,x_{q}(0)$ for which the solutions
$x_{i}(t)$ of the systems~\eqref{eqn:array} yield
\begin{eqnarray*}
y_{ij}(t)\equiv 0\ \mbox{for all}\ (i,\,j)\ \mbox{and}\
\|x_{k}(t)-x_{\ell}(t)\|\not\to 0
\end{eqnarray*}
for some pair $(k,\,\ell)$. Let $\xi(t)=[x_{1}(t)^{T}\
x_{2}(t)^{T}\ \cdots\ x_{q}(t)^{T}]^{T}$. Note that each
$x_{i}(t)$ solves ${\dot x}_{i}=Ax_{i}$. Also, recall that
$\mu_{1},\,\mu_{1},\,\ldots,\,\mu_{m}$ denote the distinct
eigenvalues of $A$. Therefore $\xi(t)$ solves
${\dot\xi}=[I_{q}\otimes A]\xi$ and enjoys the structure
\begin{eqnarray*}
\xi(t)=p_{1}(t)e^{\mu_{1}t}+p_{2}(t)e^{\mu_{2}t}+\cdots+p_{m}(t)e^{\mu_{m}t}
\end{eqnarray*}
for some polynomials $p_{1}(t),\,p_{2}(t),\,\ldots,\,p_{m}(t)$
whose coefficients are vectors in $(\Complex^{n})^{q}$. Let
$L={\rm lap}\,\Gamma(W_{ij})$ and observe
$[(e_{k}-e_{\ell})\otimes I_{n}]^{T}\xi(t)=x_{k}(t)-x_{\ell}(t)$.
By Lemma~\ref{lem:two} we have $L\xi(t)\equiv 0$ because
$y_{ij}(t)\equiv 0$ for all $(i,\,j)$. We also have
$[(e_{k}-e_{\ell})\otimes I_{n}]^{T}\xi(t)\not\to 0$ because
$\|x_{k}(t)-x_{\ell}(t)\|\not\to 0$. Let us here make a few
observations. Since $\mu_{1},\,\mu_{2},\,\ldots,\,\mu_{m}$ are
distinct, the collection of mappings $\{t\mapsto
p_{\sigma}(t)e^{\mu_{\sigma}t}:p_{\sigma}(t)\not\equiv
0,\,\sigma=1,\,2,\,\ldots,\,m\}$ are linearly independent.
Therefore $L\xi(t)\equiv 0$ implies
\begin{eqnarray}\label{eqn:hungry}
Lp_{\sigma}(t)e^{\mu_{\sigma}t}\equiv 0
\end{eqnarray}
for all $\sigma$. Moreover, $[(e_{k}-e_{\ell})\otimes
I_{n}]^{T}\xi(t)\not\to 0$ implies
\begin{eqnarray}\label{eqn:thirsty}
[(e_{k}-e_{\ell})\otimes
I_{n}]^{T}p_{\sigma}(t)e^{\mu_{\sigma}t}\not\to 0
\end{eqnarray}
for some $\sigma$. Finally, ${\dot \xi}(t)=[I_{q}\otimes A]\xi(t)$
implies
\begin{eqnarray}\label{eqn:tired}
\frac{d}{dt}\left\{p_{\sigma}(t)e^{\mu_{\sigma}t}\right\}=[I_{q}\otimes
A]p_{\sigma}(t)e^{\mu_{\sigma}t}
\end{eqnarray}
for all $\sigma$. Let us now fix an index
$\sigma\in\{1,\,2,\,\ldots,\,m\}$ that satisfies
\eqref{eqn:thirsty}. Clearly, we have ${\rm Re}\,\mu_{\sigma}\geq
0$. Let
$p_{\sigma}(t)=\vartheta_{r}t^{r}+\cdots+\vartheta_{1}t+\vartheta_{0}$
with
$\vartheta_{0},\,\vartheta_{1},\,\ldots,\,\vartheta_{r}\in(\Complex^{n})^{q}$
and $\vartheta_{r}\neq 0$.  By \eqref{eqn:tired} we can write
$p_{\sigma}(t)e^{\mu_{\sigma}t}=[I_{q}\otimes
e^{At}]\vartheta_{0}$. This implies
\begin{eqnarray*}
[(e_{k}-e_{\ell})\otimes I_{n}]^{T}\vartheta_{0}\neq 0
\end{eqnarray*}
for otherwise ($[(e_{k}-e_{\ell})\otimes
I_{n}]^{T}\vartheta_{0}=0$) we would have had
\begin{eqnarray*}
[(e_{k}-e_{\ell})\otimes
I_{n}]^{T}p_{\sigma}(t)e^{\mu_{\sigma}t}&=&
[(e_{k}-e_{\ell})\otimes I_{n}]^{T}[I_{q}\otimes
e^{At}]\vartheta_{0}\\
&=& [(e_{k}-e_{\ell})^{T}\otimes e^{At}]\vartheta_{0}\\
&=& e^{At}[(e_{k}-e_{\ell})\otimes
I_{n}]^{T}\vartheta_{0}\\
&=&0
\end{eqnarray*}
which contradicts \eqref{eqn:thirsty}. Note that the mappings
$t\mapsto\vartheta_{0}e^{\mu_{\sigma}t},\,t\mapsto\vartheta_{1}te^{\mu_{\sigma}t},\,\ldots,\,t\mapsto\vartheta_{r}t^{r}e^{\mu_{\sigma}t}$
are linearly independent. Therefore \eqref{eqn:hungry} yields
$L\vartheta_{\nu}t^{\nu}e^{\mu_{\sigma}t}\equiv 0$ for all
$\nu\in\{0,\,1,\,\ldots,\,r\}$. Consequently, $L\vartheta_{\nu}=0$
for all $\nu\in\{0,\,1,\,\ldots,\,r\}$.

Since
$t\mapsto(\vartheta_{r}t^{r}+\cdots+\vartheta_{1}t+\vartheta_{0})e^{\mu_{\sigma}t}$
is a solution of ${\dot\xi}=[I_{q}\otimes A]\xi$ we have the
following chain
\begin{eqnarray*}
([I_{q}\otimes A]-\mu_{\sigma}I_{nq})\vartheta_{0}&=&\vartheta_{1}\\
([I_{q}\otimes A]-\mu_{\sigma}I_{nq})\vartheta_{1}&=&2\vartheta_{2}\\
&\vdots&\\
([I_{q}\otimes A]-\mu_{\sigma}I_{nq})\vartheta_{r-1}&=&r\vartheta_{r}\\
([I_{q}\otimes A]-\mu_{\sigma}I_{nq})\vartheta_{r}&=&0\,.
\end{eqnarray*}
Let us now fix an index $\nu\in\{0,\,1,\,\ldots,\,r\}$ that
satisfies $\vartheta_{\nu}\notin\setS_{n}$ and $([I_{q}\otimes
A]-\mu_{\sigma}I_{nq})\vartheta_{\nu}\in\setS_{n}$. Such $\nu$
should exist because $\vartheta_{0}\notin\setS_{n}$ (thanks to
$[(e_{k}-e_{\ell})\otimes I_{n}]^{T}\vartheta_{0}\neq 0$) and
$([I_{q}\otimes
A]-\mu_{\sigma}I_{nq})\vartheta_{r}=0\in\setS_{n}$. Let
$\zeta_{1}=[{\tilde \rho}_{1}^{T}\ {\tilde \rho}_{2}^{T}\ \cdots\
{\tilde \rho}_{q}^{T}]^{T}=\vartheta_{\nu}$ with ${\tilde
\rho}_{i}\in\Complex^{n}$ and $\zeta_{2}=[{\bar \rho}^{T}\ {\bar
\rho}^{T}\ \cdots\ {\bar \rho}^{T}]^{T}=([I_{q}\otimes
A]-\mu_{\sigma}I_{nq})\vartheta_{\nu}$ with ${\bar
\rho}\in\Complex^{n}$. Then $\left([I_{q}\otimes A]-\mu_{\sigma}
I_{nq}\right)\zeta_{1}=\zeta_{2}$ yields $(A-\mu_{\sigma}
I_{n}){\tilde \rho}_{i}={\bar \rho}$ for all
$i\in\{1,\,2,\,\ldots,\,q\}$. Choose an arbitrary index
$a\in\{1,\,2,\,\ldots,\,q\}$ and define $\zeta_{3}=[{\tilde
\rho}_{a}^{T}\ {\tilde \rho}_{a}^{T}\ \cdots\ {\tilde
\rho}_{a}^{T}]^{T}$ and $\zeta_{4}=\zeta_{1}-\zeta_{3}$. Note that
$\zeta_{3}\in\setS_{n}$ and $\zeta_{4}\notin\setS_{n}$. Moreover,
since both $\zeta_{1}$ and $\zeta_{3}$ belong to ${\rm null}\,L$,
we have $L\zeta_{4}=0$. Observe
\begin{eqnarray*}
\left([I_{q}\otimes A]-\mu_{\sigma}
I_{nq}\right)\zeta_{4}&=&\left([I_{q}\otimes
A]-\mu_{\sigma} I_{nq}\right)(\zeta_{1}-\zeta_{3})\\
&=&\zeta_{2}-\left([I_{q}\otimes
A]-\mu_{\sigma} I_{nq}\right)\zeta_{3}\\
&=&\left[\begin{array}{c}{\bar \rho}\\\vdots\\{\bar
\rho}\end{array}\right]-\left[\begin{array}{c}(A-\mu_{\sigma}
I_{n}){\tilde \rho}_{a}\\ \vdots\\ (A-\mu_{\sigma} I_{n}){\tilde \rho}_{a}\end{array}\right]\\
&=&0\,.
\end{eqnarray*}
Let $[\rho_{1}^{T}\ \rho_{2}^{T}\ \cdots\
\rho_{q}^{T}]^{T}=\zeta_{4}$. We can write
\begin{eqnarray*}
\left[\begin{array}{c}(A-\mu_{\sigma} I_{n})\rho_{1}\\
\vdots\\ (A-\mu_{\sigma}
I_{n})\rho_{q}\end{array}\right]=\left([I_{q}\otimes
A]-\mu_{\sigma} I_{nq}\right)\zeta_{4}=0\,.
\end{eqnarray*}
Therefore every nonzero $\rho_{i}$ is an eigenvector of $A$ with
eigenvalue $\mu_{\sigma}$. In particular, for each $\rho_{i}$
there uniquely exists $z_{i}\in\Complex^{n_{\sigma}}$ such that
$\rho_{i}=V_{\sigma}z_{i}$. By stacking these $z_{i}$ into
$\eta=[z_{1}^{T}\ z_{2}^{T}\ \cdots\ z_{q}^{T}]^{T}$ we have
$\zeta_{4}=[I_{q}\otimes V_{\sigma}]\eta$. Recall that we have
already obtained $\zeta_{4}\notin\setS_{n}$ and $L\zeta_{4}=0$.
Hence Lemma~\ref{lem:three} assures us that the eigengraph
$\Gamma(C_{ij}V_{\sigma})$ is not connected.

{\em Det.$\implies$Con.} Suppose that for some
$\sigma\in\{1,\,2,\,\ldots,\,m\}$ the graph
$\Gamma(C_{ij}V_{\sigma})$ is not connected and ${\rm
Re}\,\mu_{\sigma}\geq 0$. By Lemma~\ref{lem:three} we know that we
can find $\zeta\in(\Complex^{n})^{q}$ which can be written as
$\zeta=[I_{q}\otimes V_{\sigma}]\eta$ for some
$\eta\in(\Complex^{n_{\sigma}})^{q}$ while satisfying $L\zeta=0$
and $\zeta\notin\setS_{n}$. Choose the initial conditions
$x_{i}(0)$ of the systems~\eqref{eqn:array} so as to satisfy
$[x_{1}(0)^{T}\ x_{2}(0)^{T}\ \cdots\ x_{q}(0)^{T}]^{T}=\zeta$.
Note that
\begin{eqnarray*}
\left[\begin{array}{c}Ax_{1}(0)\\\vdots\\Ax_{q}(0)\end{array}\right]&=&[I_{q}\otimes
A]\zeta=[I_{q}\otimes A][I_{q}\otimes
V_{\sigma}]\eta=[I_{q}\otimes
AV_{\sigma}]\eta\\
&=&[I_{q}\otimes
\mu_{\sigma}V_{\sigma}]\eta=\mu_{\sigma}[I_{q}\otimes
V_{\sigma}]\eta
=\mu_{\sigma}\zeta\\
&=&\left[\begin{array}{c}\mu_{\sigma}x_{1}(0)\\\vdots\\\mu_{\sigma}x_{q}(0)\end{array}\right]\,.
\end{eqnarray*}
That is, each nonzero $x_{i}(0)$ is an eigenvector of $A$ with
eigenvalue $\mu_{\sigma}$. Therefore we have
$x_{i}(t)=x_{i}(0)e^{\mu_{\sigma}t}$ for all
$i\in\{1,\,2,\,\ldots,\,q\}$. Since $L\zeta=0$ the solutions
$x_{i}(t)$ yield by Lemma~\ref{lem:two} $y_{ij}(t)\equiv 0$ for
all $(i,\,j)$. However, there exists some pair $(k,\,\ell)$ for
which $x_{k}(0)\neq x_{\ell}(0)$ because $[x_{1}(0)^{T}\
x_{2}(0)^{T}\ \cdots\ x_{q}(0)^{T}]^{T}=\zeta\notin\setS_{n}$.
Moreover, we can write
\begin{eqnarray*}
\|x_{k}(t)-x_{\ell}(t)\|=\|x_{k}(0)e^{\mu_{\sigma}t}-x_{\ell}(0)e^{\mu_{\sigma}t}\|=|e^{\mu_{\sigma}t}|\cdot\|x_{k}(0)-x_{\ell}(0)\|\not\to
0
\end{eqnarray*}
because ${\rm Re}\,\mu_{\sigma}\geq 0$. Therefore the array
$[(C_{ij}),\,A]$ cannot be detectable.
\end{proof}

\section{Pairwise observability and effective conductance}

Hitherto, regarding the array~\eqref{eqn:array}, we have focused
solely on the total synchronization, i.e., $x_{i}(t)\equiv
x_{j}(t)$ for all pairs $(i,\,j)$. Henceforth we consider partial
synchronization, where certain pairs of systems are possibly out
of synchrony. The reason that this problem is worth tackling is
threefold. First, it puts the previous analysis on a more complete
footing. Second, for applications where total synchronization is
desired but not achieved, it is meaningful to want to determine
how far off we are from the goal. Third, it is not difficult to
imagine situations where total synchronization is not desired for
reasons of security. For instance, in a secure communication
scenario there may be an array with many identical systems among
which there is a pair of distant units that are desired to
synchronize without synchronizing with their immediate neighbors.
These motivate the following definition.

\begin{definition}\label{def:klobs}
An array $[(C_{ij}),\,A]$ is said to be $(k,\,\ell)$-{\em
observable} if
\begin{eqnarray*}
y_{ij}(t)\equiv 0\ \mbox{for all}\ (i,\,j) \implies x_{k}(t)\equiv
x_{\ell}(t)
\end{eqnarray*}
for all initial conditions
$x_{1}(0),\,x_{2}(0),\,\ldots,\,x_{q}(0)$.
\end{definition}
And the sister definition reads:
\begin{definition}\label{def:kldet}
An array $[(C_{ij}),\,A]$ is said to be $(k,\,\ell)$-{\em
detectable} if
\begin{eqnarray*}
y_{ij}(t)\equiv 0\ \mbox{for all}\ (i,\,j) \implies \|x_{k}(t)-
x_{\ell}(t)\|\to 0
\end{eqnarray*}
for all initial conditions
$x_{1}(0),\,x_{2}(0),\,\ldots,\,x_{q}(0)$.
\end{definition}

As before, where we studied the observability of an array by means
of the connectivity of interconnection graph, we will again
approach the problem from graphic angle. Recall that a 1-graph is
connected when the null space of its Laplacian is spanned by the
vector of all ones, which led us to the generalization stated in
Lemma~\ref{lem:one}. Likewise, a pair of vertices
$(v_{k},\,v_{\ell})$ of a 1-graph is connected if any vector that
belongs to the null space of the Laplacian is with identical $k$th
and $\ell$th entries. We now obtain the natural generalization of
pairwise connectivity for $n$-graphs. Let $\Gamma$ be an $n$-graph
with $q$ vertices. Let $e_{i}\in\Complex^{q}$ be the unit vector
with $i$th entry one and the remaining entries zero. For $n=1$ the
connectedness of the pair $(v_{k},\,v_{\ell})$ is equivalent to
the condition $(e_{k}-e_{\ell})\in({\rm null}\,{\rm
lap}\,\Gamma)^{\perp}$. This suggests:

\begin{definition}
An $n$-graph $\Gamma$ is said to be {\em $(k,\,\ell)$-connected}
if ${\rm range}\,[(e_{k}-e_{\ell})\otimes I_{n}]\subset({\rm
null}\,{\rm lap}\,\Gamma)^{\perp}$.
\end{definition}

It may seem reasonable to expect that Theorem~\ref{thm:obs} and
Theorem~\ref{thm:det} of the previous section can be effortlessly
converted into ``pairwise'' statements by simply replacing the
words {\em observable, detectable, connected} therein with {\em
$(k,\,\ell)$-observable, $(k,\,\ell)$-detectable,
$(k,\,\ell)$-connected}, respectively. Surprisingly enough this is
not the case; certain associations disappear in the pairwise
domain. In particular, an array that is not
$(k,\,\ell)$-observable may still have all its eigengraphs
$(k,\,\ell)$-connected. We now proceed by establishing the
remaining links. Then we provide evidence (counterexample) for the
missing implications.

\begin{theorem}\label{thm:klobs}
The array $[(C_{ij}),\,A]$ is $(k,\,\ell)$-observable if and only
if the $n$-graph $\Gamma(W_{ij})$ is $(k,\,\ell)$-connected.
\end{theorem}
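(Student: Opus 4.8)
The plan is to prove both directions by connecting the dynamical notion of $(k,\ell)$-observability to the linear-algebraic condition $\mathrm{range}\,[(e_k-e_\ell)\otimes I_n]\subset(\mathrm{null}\,L)^\perp$, where $L=\mathrm{lap}\,\Gamma(W_{ij})$. The workhorse is Lemma~\ref{lem:two}, which tells us that $y_{ij}(t)\equiv 0$ for all $(i,j)$ is equivalent to $L\xi(0)=0$, i.e., $\xi(0)\in\mathrm{null}\,L$. The quantity we care about is $x_k(t)-x_\ell(t)$, which can be written cleanly as $[(e_k-e_\ell)\otimes I_n]^{T}\xi(t)$. So the whole theorem should reduce to the statement: for every $\xi(0)\in\mathrm{null}\,L$ we have $x_k(t)\equiv x_\ell(t)$ if and only if $(e_k-e_\ell)\otimes I_n$ annihilates all of $\mathrm{null}\,L$.

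First I would set up the forward dynamics. Since each $x_i$ solves $\dot x_i=Ax_i$, the stacked vector satisfies $\xi(t)=[I_q\otimes e^{At}]\xi(0)$, and hence
\begin{eqnarray*}
x_k(t)-x_\ell(t)=[(e_k-e_\ell)\otimes I_n]^{T}[I_q\otimes e^{At}]\xi(0)=e^{At}[(e_k-e_\ell)\otimes I_n]^{T}\xi(0)\,,
\end{eqnarray*}
using the mixed-product property of the Kronecker product exactly as in the proof of Theorem~\ref{thm:det}. Because $e^{At}$ is invertible, $x_k(t)-x_\ell(t)\equiv 0$ is equivalent to $[(e_k-e_\ell)\otimes I_n]^{T}\xi(0)=0$. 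Thus the array is $(k,\ell)$-observable precisely when every $\xi(0)\in\mathrm{null}\,L$ (which, by Lemma~\ref{lem:two}, is exactly the set of initial conditions producing identically zero relative outputs) satisfies $[(e_k-e_\ell)\otimes I_n]^{T}\xi(0)=0$.

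For the $(k,\ell)$-connectivity side I would translate the orthogonality condition into the same annihilation statement. Since $L$ is Hermitian, $(\mathrm{null}\,L)^\perp=\mathrm{range}\,L$, but more directly: $\mathrm{range}\,[(e_k-e_\ell)\otimes I_n]\subset(\mathrm{null}\,L)^\perp$ means that each column of $(e_k-e_\ell)\otimes I_n$ is orthogonal to every vector of $\mathrm{null}\,L$, which is the same as saying $[(e_k-e_\ell)\otimes I_n]^{T}\zeta=0$ for all $\zeta\in\mathrm{null}\,L$. This is verbatim the condition extracted from the observability side, so the two notions coincide and both implications follow at once.

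The routine but necessary care will be the Kronecker bookkeeping — getting $[(e_k-e_\ell)\otimes I_n]^{T}[I_q\otimes e^{At}]=e^{At}[(e_k-e_\ell)\otimes I_n]^{T}$ right (treating $e_k-e_\ell\in\Complex^q$ as a column so its transpose is a row) and making sure the orthogonal-complement characterization of the range condition is stated over $\Complex$ with the conjugate-transpose inner product. The only genuine subtlety, which I would flag as the main point to get exactly right, is handling real versus complex entries of $e_k-e_\ell$: since these entries are real, the transpose and conjugate transpose agree on that factor, so no conjugation issue arises and the orthogonality condition collapses neatly to the annihilation condition. Beyond that, the proof is essentially a dictionary translation with no delicate estimates.
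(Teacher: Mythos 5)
Your proof is correct, and it takes a genuinely (if mildly) different route from the paper's in the sufficiency direction. Both arguments run through Lemma~\ref{lem:two} and the identification $x_{k}(t)-x_{\ell}(t)=[(e_{k}-e_{\ell})\otimes I_{n}]^{T}\xi(t)$, and your necessity argument (choose $\zeta\in{\rm null}\,L$ with $[(e_{k}-e_{\ell})\otimes I_{n}]^{T}\zeta\neq 0$ as initial data) is the paper's verbatim. Where you diverge: for ``connected implies observable,'' the paper uses Hermitian-ness to write $({\rm null}\,L)^{\perp}={\rm range}\,L$, solves $LR=(e_{k}-e_{\ell})\otimes I_{n}$ for a multiplier $R$, and then reads off $x_{k}(t)-x_{\ell}(t)=R^{*}L\xi(t)\equiv 0$ directly from $L\xi(t)\equiv 0$, never invoking the flow; you instead propagate the initial condition, using $\xi(t)=[I_{q}\otimes e^{At}]\xi(0)$ and the invertibility of $e^{At}$ to show $x_{k}(t)\equiv x_{\ell}(t)$ if and only if $[(e_{k}-e_{\ell})\otimes I_{n}]^{T}\xi(0)=0$, which collapses the entire theorem to the single inclusion ${\rm null}\,L\subset{\rm null}\,[(e_{k}-e_{\ell})\otimes I_{n}]^{T}$ and delivers both implications at once. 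Your version is more symmetric and dispenses with $R$; the paper's multiplier device has the advantage of operating at the level of $L\xi(t)\equiv 0$ alone, and it is the same mechanism that recurs in Theorem~\ref{thm:effcond}, where solvability of $L[\tilde X_{1}^{T}\ \cdots\ \tilde X_{q}^{T}]^{T}=(e_{k}-e_{\ell})\otimes I_{n}$ produces the effective conductance, so it sets up the sequel. Two minor remarks: the point you flag about the real entries of $e_{k}-e_{\ell}$ is exactly right, since $[(e_{k}-e_{\ell})\otimes I_{n}]^{*}=[(e_{k}-e_{\ell})\otimes I_{n}]^{T}$ makes the complex orthogonality condition coincide with the transpose annihilation you use; and the Kronecker bookkeeping can be bypassed entirely by noting that $x_{k}-x_{\ell}$ itself solves $\dot z=Az$, whence $x_{k}(t)-x_{\ell}(t)=e^{At}(x_{k}(0)-x_{\ell}(0))$ without any mixed-product manipulation.
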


\begin{proof}
{\em Con.$\implies$Obs.} Suppose that $\Gamma(W_{ij})$ is
$(k,\,\ell)$-connected. Consider the solutions $x_{i}(t)$ of the
systems~\eqref{eqn:array}. Let $L={\rm lap}\,\Gamma(W_{ij})$ be
the Laplacian and $\xi(t)=[x_{1}^{T}(t)\ x_{2}^{T}(t)\ \cdots\
x_{q}^{T}(t)]^{T}$. Since $\Gamma(W_{ij})$ is
$(k,\,\ell)$-connected we have ${\rm
range}\,[(e_{k}-e_{\ell})\otimes I_{n}]\subset({\rm
null}\,L)^{\perp}={\rm range}\,L$ because $L$ is Hermitian.
Therefore we can find a matrix $R\in\Complex^{(nq)\times n}$ such
that $LR=(e_{k}-e_{\ell})\otimes I_{n}$. Then by
Lemma~\ref{lem:two} we can write
\begin{eqnarray*}
y_{ij}(t)\equiv 0\ \mbox{for all}\ (i,\,j) &\implies& L\xi(t)\equiv 0\\
&\implies& R^{*}L\xi(t)\equiv 0\\
&\implies& [(e_{k}-e_{\ell})\otimes I_{n}]^{T}\xi(t)\equiv 0\\
&\implies& x_{k}(t)\equiv x_{\ell}(t)\,.
\end{eqnarray*}
Hence the array $[(C_{ij}),\,A]$ is $(k,\,\ell)$-observable.

{\em Obs.$\implies$Con.} Suppose that $\Gamma(W_{ij})$ is not
$(k,\,\ell)$-connected. Then there must exist a vector
$\zeta\in(\Complex^{n})^{q}$ that satisfies both $L\zeta=0$ and
$[(e_{k}-e_{\ell})\otimes I_{n}]^{T}\zeta\neq 0$. Choose the
initial conditions $x_{i}(0)$ of the systems~\eqref{eqn:array} so
as to satisfy $[x_{1}(0)^{T}\ x_{2}(0)^{T}\ \cdots\
x_{q}(0)^{T}]^{T}=\zeta$. Then by Lemma~\ref{lem:two} we have
$y_{ij}(t)\equiv 0$ for all $(i,\,j)$. However,
$x_{k}(t)\not\equiv x_{\ell}(t)$ because $[(e_{k}-e_{\ell})\otimes
I_{n}]^{T}[x_{1}(0)^{T}\ x_{2}(0)^{T}\ \cdots\
x_{q}(0)^{T}]^{T}\neq 0$. I.e., the array $[(C_{ij}),\,A]$ cannot
be $(k,\,\ell)$-observable.
\end{proof}

\vspace{0.12in}

The proofs of the next two theorems are almost identical. We
therefore prove only the latter.

\begin{theorem}\label{thm:klobsV}
If the array $[(C_{ij}),\,A]$ is $(k,\,\ell)$-observable then all
the eigengraphs $\Gamma(C_{ij}V_{\sigma})$ are
$(k,\,\ell)$-connected.
\end{theorem}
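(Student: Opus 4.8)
The plan is to prove the contrapositive: assuming that some eigengraph $\Gamma(C_{ij}V_{\sigma})$ fails to be $(k,\,\ell)$-connected, I will exhibit initial conditions witnessing that the array is not $(k,\,\ell)$-observable. First I would unpack the negated hypothesis. Writing $L_{\sigma}={\rm lap}\,\Gamma(C_{ij}V_{\sigma})$, which is Hermitian so that $({\rm null}\,L_{\sigma})^{\perp}={\rm range}\,L_{\sigma}$, the failure of the inclusion ${\rm range}\,[(e_{k}-e_{\ell})\otimes I_{n_{\sigma}}]\subset({\rm null}\,L_{\sigma})^{\perp}$ amounts to the existence of a null vector of $L_{\sigma}$ that is not orthogonal to that range. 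Partitioning such a vector as $\eta=[z_{1}^{T}\ z_{2}^{T}\ \cdots\ z_{q}^{T}]^{T}$ with $z_{i}\in\Complex^{n_{\sigma}}$, a short pairing computation shows that the non-orthogonality is equivalent to $z_{k}\neq z_{\ell}$. Thus I obtain $\eta$ with $L_{\sigma}\eta=0$ and $z_{k}\neq z_{\ell}$.

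The second step lifts $\eta$ from the eigengraph to the interconnection graph. Set $\zeta=[I_{q}\otimes V_{\sigma}]\eta$ and $L={\rm lap}\,\Gamma(W_{ij})$. The identity established inside the proof of Lemma~\ref{lem:three}, namely $\zeta^{*}L\zeta=(1+|\mu_{\sigma}|^{2}+\cdots+|\mu_{\sigma}|^{2(n-1)})\,\eta^{*}L_{\sigma}\eta$, holds verbatim for this $\eta$; since the scalar factor is strictly positive, $L_{\sigma}\eta=0$ forces $\zeta^{*}L\zeta=0$, and then $L\geq 0$ gives $L\zeta=0$. I would then track the relative component: $[(e_{k}-e_{\ell})\otimes I_{n}]^{T}\zeta$ selects the difference of the $k$th and $\ell$th blocks of $\zeta$, namely $V_{\sigma}(z_{k}-z_{\ell})$, and the full column rank of $V_{\sigma}$ together with $z_{k}\neq z_{\ell}$ yields $V_{\sigma}(z_{k}-z_{\ell})\neq 0$.

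Finally I would assemble these into a counterexample to $(k,\,\ell)$-observability. Choosing the initial conditions so that $[x_{1}(0)^{T}\ \cdots\ x_{q}(0)^{T}]^{T}=\zeta$, the relation $L\zeta=0$ combined with Lemma~\ref{lem:two} gives $y_{ij}(t)\equiv 0$ for all $(i,\,j)$. On the other hand $x_{k}(0)-x_{\ell}(0)=[(e_{k}-e_{\ell})\otimes I_{n}]^{T}\zeta=V_{\sigma}(z_{k}-z_{\ell})\neq 0$, so the continuous trajectories $x_{k}(t)$ and $x_{\ell}(t)$ already disagree at $t=0$, whence $x_{k}(t)\not\equiv x_{\ell}(t)$. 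This contradicts $(k,\,\ell)$-observability and completes the contrapositive. I expect the only delicate point to be the first step---faithfully converting the orthogonal-complement formulation of $(k,\,\ell)$-connectivity into the existence of a null vector of $L_{\sigma}$ with unequal $k$th and $\ell$th blocks---since everything afterwards reuses machinery already in Lemmas~\ref{lem:two} and~\ref{lem:three}. Unlike the detectability counterpart, no constraint on ${\rm Re}\,\mu_{\sigma}$ is needed here, because mere disagreement at a single instant already defeats $x_{k}(t)\equiv x_{\ell}(t)$.
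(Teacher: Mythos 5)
Your proof is correct and follows essentially the route the paper intends: the paper proves only the detectability sibling (Theorem~\ref{thm:kldet}), noting the two proofs are almost identical, and your argument is exactly that proof's observability adaptation --- negate $(k,\,\ell)$-connectivity to get $\eta\in{\rm null}\,L_{\sigma}$ with $z_{k}\neq z_{\ell}$, lift via $\zeta=[I_{q}\otimes V_{\sigma}]\eta$ using the quadratic-form identity from Lemma~\ref{lem:three} to get $L\zeta=0$, and invoke Lemma~\ref{lem:two} with initial condition $\zeta$. Your closing observation is also right: since $x_{k}(0)\neq x_{\ell}(0)$ alone defeats $x_{k}(t)\equiv x_{\ell}(t)$, the eigenvector structure of $\zeta$ and the condition ${\rm Re}\,\mu_{\sigma}\geq 0$ (both needed in the detectability version) can be dropped here.
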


\begin{theorem}\label{thm:kldet}
If the array $[(C_{ij}),\,A]$ is $(k,\,\ell)$-detectable then all
the eigengraphs $\Gamma(C_{ij}V_{\sigma})$ with ${\rm
Re}\,\mu_{\sigma}\geq 0$ are $(k,\,\ell)$-connected.
\end{theorem}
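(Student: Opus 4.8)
The plan is to argue by contraposition, closely following the \emph{Det.$\implies$Con.} direction of Theorem~\ref{thm:det} but tracking the single pair $(k,\,\ell)$ in place of the whole synchronization subspace. So I would suppose that for some $\sigma$ with ${\rm Re}\,\mu_{\sigma}\geq 0$ the eigengraph $\Gamma(C_{ij}V_{\sigma})$ is \emph{not} $(k,\,\ell)$-connected, and then exhibit initial conditions that defeat $(k,\,\ell)$-detectability.

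First I would unpack the failure of $(k,\,\ell)$-connectivity for the $n_{\sigma}$-graph $\Gamma(C_{ij}V_{\sigma})$. Writing $L_{\sigma}={\rm lap}\,\Gamma(C_{ij}V_{\sigma})$, the definition says $(k,\,\ell)$-connectedness is ${\rm range}\,[(e_{k}-e_{\ell})\otimes I_{n_{\sigma}}]\subset({\rm null}\,L_{\sigma})^{\perp}$; its negation supplies a vector $\eta=[z_{1}^{T}\ z_{2}^{T}\ \cdots\ z_{q}^{T}]^{T}\in{\rm null}\,L_{\sigma}$ (with $z_{i}\in\Complex^{n_{\sigma}}$) that is not orthogonal to that range, i.e.\ $[(e_{k}-e_{\ell})\otimes I_{n_{\sigma}}]^{T}\eta=z_{k}-z_{\ell}\neq 0$. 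I would then set $\zeta=[I_{q}\otimes V_{\sigma}]\eta$ and invoke the identity established inside the proof of Lemma~\ref{lem:three}, namely $\zeta^{*}L\zeta=(1+|\mu_{\sigma}|^{2}+\cdots+|\mu_{\sigma}|^{2(n-1)})\,\eta^{*}L_{\sigma}\eta$ with $L={\rm lap}\,\Gamma(W_{ij})$. Since $\eta\in{\rm null}\,L_{\sigma}$ gives $\eta^{*}L_{\sigma}\eta=0$, positivity of the scalar factor yields $\zeta^{*}L\zeta=0$, and $L\geq 0$ then forces $L\zeta=0$.

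Next I would choose the initial conditions $x_{i}(0)=V_{\sigma}z_{i}$, so that $\xi(0)=[x_{1}(0)^{T}\ \cdots\ x_{q}(0)^{T}]^{T}=\zeta$. Because ${\rm range}\,V_{\sigma}={\rm null}\,[A-\mu_{\sigma}I_{n}]$, each $x_{i}(0)$ lies in the $\mu_{\sigma}$-eigenspace, whence $x_{i}(t)=e^{\mu_{\sigma}t}x_{i}(0)$. From $L\xi(0)=L\zeta=0$, Lemma~\ref{lem:two} delivers $y_{ij}(t)\equiv 0$ for all $(i,\,j)$. Finally $x_{k}(t)-x_{\ell}(t)=e^{\mu_{\sigma}t}V_{\sigma}(z_{k}-z_{\ell})$, and since $V_{\sigma}$ has full column rank while $z_{k}\neq z_{\ell}$, the vector $V_{\sigma}(z_{k}-z_{\ell})$ is nonzero; hence $\|x_{k}(t)-x_{\ell}(t)\|=|e^{\mu_{\sigma}t}|\,\|V_{\sigma}(z_{k}-z_{\ell})\|=e^{({\rm Re}\,\mu_{\sigma})t}\|V_{\sigma}(z_{k}-z_{\ell})\|\not\to 0$, using ${\rm Re}\,\mu_{\sigma}\geq 0$. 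Thus the array fails to be $(k,\,\ell)$-detectable, which establishes the contrapositive.

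I do not expect a genuinely hard step: the argument is the pairwise refinement of one already carried out, with Lemmas~\ref{lem:two} and \ref{lem:three} doing most of the work. The one place that demands care is the opening translation, reading ``not $(k,\,\ell)$-connected'' as the existence of a null-space witness $\eta$ with $z_{k}\neq z_{\ell}$. This is precisely the pairwise feature distinguishing the claim from its connectivity analog, and it is what guarantees the \emph{specific} pair $(k,\,\ell)$ desynchronizes rather than merely some unspecified pair as in Theorem~\ref{thm:det}.
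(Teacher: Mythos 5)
Your proposal is correct and takes essentially the same route as the paper's proof: extract a null-space witness $\eta$ of $L_{\sigma}$ with $[(e_{k}-e_{\ell})\otimes I_{n_{\sigma}}]^{T}\eta\neq 0$, lift it to $\zeta=[I_{q}\otimes V_{\sigma}]\eta\in{\rm null}\,L$ via the quadratic-form identity from the proof of Lemma~\ref{lem:three}, then run pure eigensolutions so that Lemma~\ref{lem:two} gives $y_{ij}(t)\equiv 0$ while $\|x_{k}(t)-x_{\ell}(t)\|=e^{({\rm Re}\,\mu_{\sigma})t}\|V_{\sigma}(z_{k}-z_{\ell})\|\not\to 0$. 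The only cosmetic difference is that you justify $x_{i}(t)=e^{\mu_{\sigma}t}x_{i}(0)$ directly from ${\rm range}\,V_{\sigma}={\rm null}\,[A-\mu_{\sigma}I_{n}]$, whereas the paper cites the corresponding computation inside its proof of Theorem~\ref{thm:det}.
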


\begin{proof}
Suppose that for some $\sigma\in\{1,\,2,\,\ldots,\,m\}$ the graph
$\Gamma(C_{ij}V_{\sigma})$ is not $(k,\,\ell)$-connected and ${\rm
Re}\,\mu_{\sigma}\geq 0$. Then there exists
$\eta\in(\Complex^{n_{\sigma}})^{q}$ that satisfies
$[(e_{k}-e_{\ell})\otimes I_{n_{\sigma}}]^{T}\eta\neq 0$ and
$L_{\sigma}\eta=0$, where $L_{\sigma}={\rm
lap}\,\Gamma(C_{ij}V_{\sigma})$. Let $\zeta\in(\Complex^{n})^{q}$
be defined as $\zeta=[I_{q}\otimes V_{\sigma}]\eta$. We can write
\begin{eqnarray*}
[(e_{k}-e_{\ell})\otimes I_{n}]^{T}\zeta
&=&[(e_{k}-e_{\ell})\otimes I_{n}]^{T}[I_{q}\otimes V_{\sigma}]\eta\\
&=&V_{\sigma}[(e_{k}-e_{\ell})\otimes I_{n_{\sigma}}]^{T}\eta\\
&\neq&0
\end{eqnarray*}
because $[(e_{k}-e_{\ell})\otimes I_{n_{\sigma}}]^{T}\eta\neq 0$
and the matrix $V_{\sigma}$ is full column rank. Let $L={\rm
lap}\,\Gamma(W_{ij})$. Recall that we have
$\zeta^{*}L\zeta=\left(1+|\mu_{\sigma}|^2+\cdots+|\mu_{\sigma}|^{2(n-1)}\right)\eta^{*}L_{\sigma}\eta$
(see the proof of Lemma~\ref{lem:three}). Hence $L_{\sigma}\eta=0$
implies $L\zeta=0$ because $L$ is Hermitian positive semidefinite.
Let us choose now the initial conditions $x_{i}(0)$ of the
systems~\eqref{eqn:array} so as to satisfy $[x_{1}(0)^{T}\
x_{2}(0)^{T}\ \cdots\ x_{q}(0)^{T}]^{T}=\zeta$. Recall that each
nonzero $x_{i}(0)$ has to be an eigenvector of $A$ with eigenvalue
$\mu_{\sigma}$ because $\zeta=[I_{q}\otimes V_{\sigma}]\eta$ (see
the proof of Theorem~\ref{thm:det}). Therefore we have
$x_{i}(t)=x_{i}(0)e^{\mu_{\sigma}t}$ for all
$i\in\{1,\,2,\,\ldots,\,q\}$. Since $L\zeta=0$ Lemma~\ref{lem:two}
gives us $y_{ij}(t)\equiv 0$ for all $(i,\,j)$. However, we have
$x_{k}(0)\neq x_{\ell}(0)$ because $[(e_{k}-e_{\ell})\otimes
I_{n}]^{T}\zeta\neq 0$. Moreover, we can write
\begin{eqnarray*}
\|x_{k}(t)-x_{\ell}(t)\|=\|x_{k}(0)e^{\mu_{\sigma}t}-x_{\ell}(0)e^{\mu_{\sigma}t}\|=|e^{\mu_{\sigma}t}|\cdot\|x_{k}(0)-x_{\ell}(0)\|\not\to
0
\end{eqnarray*}
because ${\rm Re}\,\mu_{\sigma}\geq 0$. Therefore the array
$[(C_{ij}),\,A]$ cannot be $(k,\,\ell)$-detectable.
\end{proof}

\vspace{0.12in}

As mentioned earlier, an array that is not $(k,\,\ell)$-observable
may still have all its eigengraphs $(k,\,\ell)$-connected. This we
find counterintuitive. Hence a counterexample here is appropriate.

{\em A counterexample.} Consider the pair
$[(C_{ij})_{i,j=1}^{3},\,A]$ with
\begin{eqnarray*}
A=\left[\begin{array}{rrrr}
 0 &    1 &    0 &  0\\
 0 &    0 &    0 &  0\\
 0 &    0 &    0 &  1\\
 0 &    0 &    0 &  0
\end{array}\right]
\end{eqnarray*}
and
\begin{eqnarray*}
C_{12}=\left[\begin{array}{rrrr}
 0 &    0 &    1 &  0\\
 0 &    0 &    0 &  1
\end{array}\right]\,,\qquad
C_{23}=\left[\begin{array}{rrrr}
 1 &    0 &    0 &  0\\
 0 &    1 &    0 &  0\\
 0 &    0 &    0 &  1
\end{array}\right]\,,\qquad
C_{31}=\left[\begin{array}{rrrr}
 0 &    1 &    1 &  0\\
 0 &    0 &    0 &  1
\end{array}\right]\,.
\end{eqnarray*}
(Recall $C_{ij}=C_{ji}$ and $C_{ii}=0$.) Clearly, $A$ has a single
($m=1$) distinct eigenvalue $\mu_{1}=0$. The corresponding matrix
$V_{1}$ satisfying ${\rm range}\,V_{1}={\rm
null}\,[A-\mu_{1}I_{4}]$ reads
\begin{eqnarray*}
V_{1}=\left[\begin{array}{rr}
 1 &    0\\
 0 &    0\\
 0 &    1\\
 0 &    0
\end{array}\right]\,.
\end{eqnarray*}
Now consider the solutions
\begin{eqnarray*}
x_{1}(t)=\left[\begin{array}{r}
 t\\
 1\\
 0\\
 0
\end{array}\right]\,,\qquad x_{2}(t)=\left[\begin{array}{r}
 0\\
 0\\
 0\\
 0
\end{array}\right]\,,\qquad x_{3}(t)=\left[\begin{array}{r}
 0\\
 0\\
 1\\
 0
\end{array}\right]\,.
\end{eqnarray*}
It is not difficult to check that these $x_{i}(t)$ satisfy ${\dot
x}_{i}=Ax_{i}$ as well as $C_{ij}(x_{i}(t)-x_{j}(t))\equiv 0$ for
all $(i,\,j)$. Noting $x_{2}(t)\not\equiv x_{3}(t)$ we conclude
therefore that the array is not $(2,\,3)$-observable. Now let us
see what the eigengraphs say on the matter. In fact, the 2-graph
$\Gamma(C_{ij}V_{1})$ is the only eigengraph of the array. The
associated Laplacian can be computed to equal
\begin{eqnarray*}
{\rm lap}\,\Gamma(C_{ij}V_{1})=\left[\begin{array}{rrrrrr}
0 &    0 &    0 &   0 &   0 &   0\\
0 &    2 &    0 &  -1 &   0 &  -1\\
0 &    0 &    1 &   0 &  -1 &   0\\
0 &   -1 &    0 &   1 &   0 &   0\\
0 &    0 &   -1 &   0 &   1 &   0\\
0 &   -1 &    0 &   0 &   0 &   1
\end{array}\right]
\end{eqnarray*}
whose null space is spanned by the columns of the matrix $N$ given
below
\begin{eqnarray*}
N=\left[\begin{array}{rrr}
1 &    0 &    0 \\
0 &    1 &    0 \\
0 &    0 &    1 \\
0 &    1 &    0 \\
0 &    0 &    1 \\
0 &    1 &    0
\end{array}\right]\,.
\end{eqnarray*}
Observe $[(e_{2}-e_{3})\otimes I_{2}]^{T}N=0$. Hence ${\rm
range}\,[(e_{2}-e_{3})\otimes I_{2}]\subset({\rm null}\,{\rm
lap}\,\Gamma(C_{ij}V_{1}))^{\perp}$. That is, the eigengraph
$\Gamma(C_{ij}V_{1})$ {\em is} $(2,\,3)$-connected despite the
fact that the array $[(C_{ij}),\,A]$ is {\em not}
$(2,\,3)$-observable.

To better understand $(k,\,\ell)$-connectivity we now direct our
attention to circuit theory, which has been a fruitful source of
ideas for graph theory. A significant example has been reported in
\cite{klein93} where the effective resistance between two nodes of
a resistive network is shown to be a meaningful tool to measure
distance between two vertices of a graph. This work inspires us to
employ effective conductance over an $n$-graph, which we will
eventually show to be closely related to pairwise observability.
Let us however first remember the definition of effective
conductance for a 1-graph, which will be our starting point for
generalization. Let $\Gamma$ be a 1-graph with $q$ vertices and
weight function $w$. Then ${\rm lap}\,\Gamma$ equals the node
admittance matrix of a resistive network $\N$ with $q$ nodes,
where the resistor connecting the nodes $i$ and $j$ has the
conductance value $w(v_{i},\,v_{j})=:g_{ij}$ (in mhos). For the
network $\N$ the effective conductance $\gamma_{k\ell}$ between
the nodes $k$ and $\ell$ is equal to the value of the current (in
amps) leaving a 1-volt voltage source connected to the node $k$
while the node $\ell$ is grounded, see Fig.~\ref{fig:effcond}.
Regarding the case depicted in Fig.~\ref{fig:effcond}, observe
that the effective conductance $\gamma_{14}$ satisfies the
following equation
\begin{eqnarray*}
[{\rm lap}\,\Gamma]\left[\begin{array}{c}1\\
x_{2}\\ x_{3}\\ 0\\ x_{5}\\ x_{6}\end{array}\right]=\left[\begin{array}{r}1\\
0\\ 0\\ -1\\ 0\\ 0\end{array}\right]\gamma_{14}
\end{eqnarray*}
where $x_{i}\in\Real$ denote the appropriate node voltages.
Imitating this equation yields:

\begin{figure}[h]
\begin{center}
\includegraphics[scale=0.55]{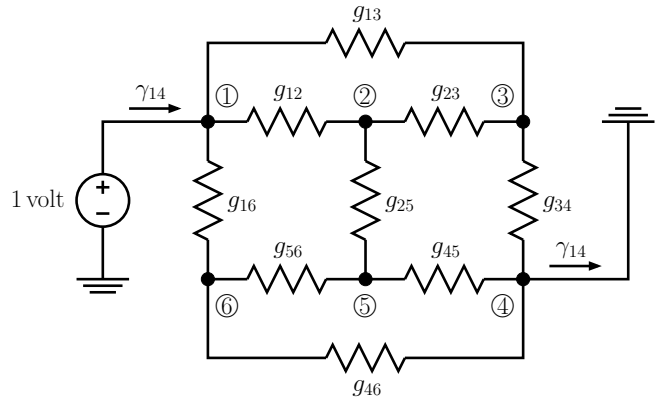}
\caption{The effective conductance between the first and fourth
nodes equals the current $\gamma_{14}$.}\label{fig:effcond}
\end{center}
\end{figure}

\begin{definition}
Given an $n$-graph $\Gamma$ with set of vertices
$\V=\{v_{1},\,v_{2},\,\ldots,\,v_{q}\}$, the {\em effective
conductance} $\Gamma_{k\ell}\in\Complex^{n\times n}$ associated to
the pair of distinct vertices $(v_{k},\,v_{\ell})$ satisfies
\begin{eqnarray}\label{eqn:effcond}
[{\rm lap}\,\Gamma]\left[\begin{array}{c}X_{1}\\ \vdots \\
X_{q}\end{array}\right]=(e_{k}-e_{\ell})\otimes
\Gamma_{k\ell}\quad\mbox{subject to}\quad X_{k}=I_{n}\ \mbox{and}\
X_{\ell}=0
\end{eqnarray}
for some $X_{i}\in\Complex^{n\times n}$.
\end{definition}

It is not evident that this definition is unambiguous. Hence we
have to make sure that effective conductance always exists,
preferably uniquely. To be able to do this we need to introduce
some notation. Let $e_{\bar k\bar\ell}\in\Complex^{q\times(q-2)}$
denote the matrix obtained from the identity matrix $I_{q}$ by
removing the $k$th and $\ell$th columns, i.e., $e_{\bar
k\bar\ell}=[e_{1}\ \cdots\ e_{k-1}\ e_{k+1}\ \cdots\ e_{\ell-1}\
e_{\ell+1}\ \cdots\ e_{q}]$. Furthermore, for $L={\rm
lap}\,\Gamma$, where $\Gamma$ is an $n$-graph with $q$ vertices we
adopt the following shortcuts.
\begin{eqnarray*}
L_{\bar k\bar\ell,\bar k\bar\ell}&=&[e_{\bar k\bar\ell}\otimes
I_{n}]^{T}L[e_{\bar k\bar\ell}\otimes I_{n}]\,,\\
L_{\bar k\bar\ell,k}&=&[e_{\bar k\bar\ell}\otimes
I_{n}]^{T}L[e_{k}\otimes I_{n}]\,,\\
L_{k,\bar k\bar\ell}&=&[e_{k}\otimes I_{n}]^{T}L[e_{\bar
k\bar\ell}\otimes I_{n}]\,,\\
L_{k,\ell}&=&[e_{k}\otimes I_{n}]^{T}L[e_{\ell}\otimes I_{n}]\,.
\end{eqnarray*}
Lastly, $(L_{\bar k\bar\ell,\bar k\bar\ell})^{+}$ indicates the
pseudo-inverse of $L_{\bar k\bar\ell,\bar k\bar\ell}$ and ${\bf
1}_{q}\in\Complex^{q}$ is the vector of all ones.

\begin{proposition}\label{prop:effcond}
Let $\Gamma$ be an $n$-graph with set of vertices
$\V=\{v_{1},\,v_{2},\,\ldots,\,v_{q}\}$ and $L={\rm lap}\,\Gamma$.
For each pair of (distinct) vertices $(v_{k},\,v_{\ell})$ the
effective conductance $\Gamma_{k\ell}$ uniquely exists and
satisfies $\Gamma_{\ell k}=\Gamma_{k\ell}=\Gamma_{k\ell}^{*}\geq
0$. In particular,
\begin{eqnarray*}
\Gamma_{k\ell}=L_{k,k}-L_{k,\bar k\bar\ell}\,(L_{ {\bar k}{\bar
\ell},{\bar k}{\bar \ell}})^{+}L_{{\bar k}{\bar \ell},k}\,.
\end{eqnarray*}
\end{proposition}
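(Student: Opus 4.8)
The plan is to turn the matrix identity \eqref{eqn:effcond} into a standard linear system for the ``interior'' blocks and to read off $\Gamma_{k\ell}$ afterwards. First I would group the unknown stack $[X_{1}^{T}\ \cdots\ X_{q}^{T}]^{T}$ into the prescribed blocks $X_{k}=I_{n}$, $X_{\ell}=0$ and the free part $X_{\bar k\bar\ell}:=[e_{\bar k\bar\ell}\otimes I_{n}]^{T}[X_{1}^{T}\ \cdots\ X_{q}^{T}]^{T}$. Under this grouping \eqref{eqn:effcond} becomes three block equations: the $k$th row reads $\Gamma_{k\ell}=L_{k,k}+L_{k,\bar k\bar\ell}X_{\bar k\bar\ell}$, the interior rows read $L_{\bar k\bar\ell,\bar k\bar\ell}X_{\bar k\bar\ell}=-L_{\bar k\bar\ell,k}$, and the $\ell$th row reads $-\Gamma_{k\ell}=L_{\ell,k}+L_{\ell,\bar k\bar\ell}X_{\bar k\bar\ell}$. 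I would dispose of the $\ell$th equation immediately: because $G_{ij}=G_{ji}$ and $G_{ii}=0$, every block column of $L$ sums to zero, i.e. $[\one_{q}\otimes I_{n}]^{T}L=0$, so the sum of all block rows of $L[X_{1}^{T}\ \cdots\ X_{q}^{T}]^{T}$ vanishes; once the interior equation holds (making the interior rows zero) this forces the $\ell$th row to be the negative of the $k$th one automatically. Hence it suffices to solve the interior equation and to \emph{define} $\Gamma_{k\ell}$ by the $k$th row.

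The crux is the solvability of $L_{\bar k\bar\ell,\bar k\bar\ell}X_{\bar k\bar\ell}=-L_{\bar k\bar\ell,k}$, equivalently the range inclusion $\mathrm{range}\,L_{\bar k\bar\ell,k}\subseteq\mathrm{range}\,L_{\bar k\bar\ell,\bar k\bar\ell}$, and this is the one spot where positive semidefiniteness is indispensable. Write $M:=L_{\bar k\bar\ell,\bar k\bar\ell}$, a principal submatrix of $L$ and hence Hermitian with $M\geq 0$. For $z\in\mathrm{null}\,M$, embed it as $\tilde z:=[e_{\bar k\bar\ell}\otimes I_{n}]z$, a vector whose $k$th and $\ell$th blocks are zero; then $\tilde z^{*}L\tilde z=z^{*}Mz=0$, and $L\geq 0$ forces $L\tilde z=0$, so in particular the $k$th block $L_{k,\bar k\bar\ell}z$ vanishes. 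Since $M$ is Hermitian and $L_{k,\bar k\bar\ell}=(L_{\bar k\bar\ell,k})^{*}$, this gives $\mathrm{null}\,M\subseteq\mathrm{null}\,(L_{\bar k\bar\ell,k})^{*}$; taking orthogonal complements yields the desired range inclusion, so the interior equation is solvable. The very same fact, $L_{k,\bar k\bar\ell}z=0$ for $z\in\mathrm{null}\,M$, supplies uniqueness of $\Gamma_{k\ell}$: any two interior solutions differ columnwise by elements of $\mathrm{null}\,M$, on which $L_{k,\bar k\bar\ell}$ vanishes, so the value $L_{k,k}+L_{k,\bar k\bar\ell}X_{\bar k\bar\ell}$ is unchanged.

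With solvability secured, the particular solution $X_{\bar k\bar\ell}=-M^{+}L_{\bar k\bar\ell,k}$ is legitimate precisely because the range condition guarantees $MM^{+}L_{\bar k\bar\ell,k}=L_{\bar k\bar\ell,k}$; substituting it into $\Gamma_{k\ell}=L_{k,k}+L_{k,\bar k\bar\ell}X_{\bar k\bar\ell}$ produces the stated formula $\Gamma_{k\ell}=L_{k,k}-L_{k,\bar k\bar\ell}(L_{\bar k\bar\ell,\bar k\bar\ell})^{+}L_{\bar k\bar\ell,k}$. For the remaining properties I would bypass termwise manipulation and use an ``energy'' identity: letting $X=[X_{1}^{T}\ \cdots\ X_{q}^{T}]^{T}$ denote any realizing stack, the defining relation gives $(Xv)^{*}L(Xw)=v^{*}\Gamma_{k\ell}w$ for all $v,w\in\Complex^{n}$ (the right-hand side emerges from $X_{k}=I_{n}$, $X_{\ell}=0$ after contracting $(e_{k}-e_{\ell})\otimes\Gamma_{k\ell}w$). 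Since $L^{*}=L\geq 0$, this at once yields $\Gamma_{k\ell}^{*}=\Gamma_{k\ell}$ and $v^{*}\Gamma_{k\ell}v=(Xv)^{*}L(Xv)\geq 0$, i.e. $\Gamma_{k\ell}\geq 0$. Finally, for $\Gamma_{\ell k}=\Gamma_{k\ell}$ I would use the all-ones null vector: if $X$ realizes $\Gamma_{k\ell}$ then $\tilde X:=(\one_{q}\otimes I_{n})-X$ satisfies $L\tilde X=-LX=(e_{\ell}-e_{k})\otimes\Gamma_{k\ell}$ with $\tilde X_{\ell}=I_{n}$ and $\tilde X_{k}=0$, which is exactly the defining system for $\Gamma_{\ell k}$; uniqueness then gives $\Gamma_{\ell k}=\Gamma_{k\ell}$. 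I expect the range-inclusion step of the second paragraph to be the main obstacle, as it is the only place that genuinely uses semidefiniteness rather than pure linear algebra; everything else follows routinely once that inclusion is in hand.
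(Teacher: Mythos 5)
Your proposal is correct, and its skeleton coincides with the paper's proof: the same reduction to the interior system $L_{\bar k\bar\ell,\bar k\bar\ell}E=-L_{\bar k\bar\ell,k}$, the same solvability argument (embed a null vector of $L_{\bar k\bar\ell,\bar k\bar\ell}$ with zero $k$th and $\ell$th blocks, use $\zeta^{*}L\zeta=0\Rightarrow L\zeta=0$ to get ${\rm null}\,L_{\bar k\bar\ell,\bar k\bar\ell}\subset{\rm null}\,L_{k,\bar k\bar\ell}$, then dualize to a range inclusion), the same identity $\Gamma_{k\ell}=\Xi^{*}L\Xi$ for Hermitian positive semidefiniteness, and the same reflection $\tilde X_{i}=I_{n}-X_{i}$ for reciprocity. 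You deviate in two local ways, both to good effect. First, where the paper verifies the $\ell$th block row by an explicit computation (establishing $L_{\ell,\bar k\bar\ell}\,E+L_{\ell,k}=-\Gamma_{k\ell}$ via the identity $e_{k}+e_{\ell}={\bf 1}_{q}-e_{\bar k\bar\ell}{\bf 1}_{q-2}$), you note that $[{\bf 1}_{q}\otimes I_{n}]^{T}L=0$ forces the block rows of $L\Xi$ to sum to zero, so once the interior rows vanish the $\ell$th row is automatically the negative of the $k$th; same fact, more economically derived. Second, and more substantively, your uniqueness argument differs: the paper proves uniqueness by the adjoint trick $\tilde\Gamma_{k\ell}=\Xi^{*}L\tilde\Xi=(L\Xi)^{*}\tilde\Xi=\Gamma_{k\ell}^{*}=\Gamma_{k\ell}$, which requires already knowing $\Gamma_{k\ell}^{*}=\Gamma_{k\ell}$, whereas you observe that every realizing stack's interior part solves the interior system, that two interior solutions differ columnwise by elements of ${\rm null}\,L_{\bar k\bar\ell,\bar k\bar\ell}$, and that $L_{k,\bar k\bar\ell}$ annihilates that null space (the very fact underlying solvability), so the $k$th row determines $\Gamma_{k\ell}$ unambiguously. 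Your version is more elementary---it needs no prior Hermitianness and exposes the solution set as an affine coset of ${\rm null}\,L_{\bar k\bar\ell,\bar k\bar\ell}$---while the paper's trick is shorter given the semidefiniteness identity already in hand. One cosmetic omission: the paper dispatches $q=2$ separately, since there $e_{\bar k\bar\ell}$ is empty; your argument degenerates correctly in that case (vacuous interior system, $\Gamma_{k\ell}=L_{k,k}=w(v_{k},v_{\ell})$), but a sentence saying so would make the write-up airtight.
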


\begin{proof} Let $(v_{k},\,v_{\ell})$ be given. For $q=2$ the result follows trivially,
since $\Gamma_{12}=w(v_{1},\,v_{2})$, where $w$ is the weight
function associated to $\Gamma$. In the sequel we will consider
the case $q\geq 3$.

{\em Existence.} We first show that \eqref{eqn:effcond} can always
be solved. For $E\in\Complex^{n(q-2)\times n}$ consider the
equation
\begin{eqnarray}\label{eqn:lock}
L_{ {\bar k}{\bar \ell},{\bar k}{\bar \ell}}\,E+L_{{\bar k}{\bar
\ell},k}=0\,.
\end{eqnarray}
Note that a solution $E$ exists for \eqref{eqn:lock} if ${\rm
range}\,L_{ {\bar k}{\bar \ell},{\bar k}{\bar \ell}}\supset {\rm
range}\,L_{{\bar k}{\bar \ell},k}$, which is equivalent to ${\rm
null}\,L_{ {\bar k}{\bar \ell},{\bar k}{\bar \ell}}\subset {\rm
null}\,L_{k,{\bar k}{\bar \ell}}$ since $(L_{ {\bar k}{\bar
\ell},{\bar k}{\bar \ell}})^{*}=L_{ {\bar k}{\bar \ell},{\bar
k}{\bar \ell}}$ and $(L_{{\bar k}{\bar \ell},k})^{*}=L_{k,{\bar
k}{\bar \ell}}$. Let us now take an arbitrary
$\eta\in\Complex^{n(q-2)}$ satisfying $L_{{\bar k}{\bar
\ell},{\bar k}{\bar \ell}}\,\eta=0$ and define $\zeta=[e_{{\bar
k}{\bar \ell}}\otimes I_{n}]\eta$. We can write
\begin{eqnarray*}
\zeta^{*}L\zeta=\eta^{*}[e_{\bar k\bar\ell}\otimes
I_{n}]^{T}L[e_{\bar k\bar\ell}\otimes I_{n}]\eta=\eta^{*}L_{{\bar
k}{\bar \ell},{\bar k}{\bar \ell}}\,\eta=0
\end{eqnarray*}
which implies $L\zeta=0$ since $L$ is Hermitian positive
semidefinite. Observe
\begin{eqnarray*}
L_{k,{\bar k}{\bar \ell}}\,\eta=[e_{k}\otimes I_{n}]^{T}L[e_{\bar
k\bar\ell}\otimes I_{n}]\eta=[e_{k}\otimes I_{n}]^{T}L\zeta=0\,.
\end{eqnarray*}
Therefore ${\rm null}\,L_{ {\bar k}{\bar \ell},{\bar k}{\bar
\ell}}\subset {\rm null}\,L_{k,{\bar k}{\bar \ell}}$ and we can
indeed find $E$ satisfying \eqref{eqn:lock}.

Choose now some $E$ satisfying \eqref{eqn:lock} and define
\begin{eqnarray}\label{eqn:stock}
\Gamma_{kl}=L_{k,{\bar k}{\bar\ell}}\,E+L_{k,k}\,.
\end{eqnarray}
Observe that we have $[{\bf 1}_{q}\otimes I_{n}]^{T}L=0$ since
${\rm null}\,L\supset\setS_{n}$ and $L^{*}=L$. Also,
$e_{k}+e_{\ell}={\bf 1}_{q}-e_{{\bar k}{\bar\ell}}{\bf 1}_{q-2}$.
We can now write using \eqref{eqn:lock} and \eqref{eqn:stock}
\begin{eqnarray}\label{eqn:barrel}
L_{\ell,{\bar k}{\bar\ell}}\,E+L_{\ell,k}&=&[e_{\ell}\otimes
I_{n}]^{T}\left(L[e_{\bar k\bar\ell}\otimes
I_{n}]E+L[e_{k}\otimes I_{n}]\right)\nonumber\\
&=&[({\bf 1}_{q}-e_{{\bar k}{\bar\ell}}{\bf 1}_{q-2}-e_{k})\otimes
I_{n}]^{T}\left(L[e_{\bar k\bar\ell}\otimes I_{n}]E+L[e_{k}\otimes
I_{n}]\right)\nonumber\\
&=&\underbrace{[{\bf 1}_{q}\otimes I_{n}]^{T}L}_{0}\left([e_{\bar
k\bar\ell}\otimes I_{n}]E+[e_{k}\otimes
I_{n}]\right)\nonumber\\
&&\quad -[{\bf 1}_{q-2}\otimes
I_{n}]^{T}\left(\underbrace{[e_{{\bar k}{\bar\ell}}\otimes
I_{n}]^{T}L[e_{\bar k\bar\ell}\otimes I_{n}]}_{L_{{\bar k}{\bar
\ell},{\bar k}{\bar \ell}}}E+\underbrace{[e_{{\bar
k}{\bar\ell}}\otimes I_{n}]^{T}L[e_{k}\otimes I_{n}]}_{L_{{\bar
k}{\bar\ell},k}}\right)\nonumber\\
&&\quad -\left(\underbrace{[e_{k}\otimes I_{n}]^{T}L[e_{\bar
k\bar\ell}\otimes
I_{n}]}_{L_{k,{\bar k}{\bar\ell}}}E+\underbrace{[e_{k}\otimes I_{n}]^{T}L[e_{k}\otimes I_{n}]}_{L_{k,k}}\right)\nonumber\\
&=&-[{\bf 1}_{q-2}\otimes I_{n}]^{T}\left(\underbrace{L_{ {\bar
k}{\bar \ell},{\bar k}{\bar \ell}}\,E+L_{{\bar k}{\bar
\ell},k}}_{0}\right)-\left(\underbrace{L_{k,{\bar k}{\bar\ell}}\,E+L_{k,k}}_{\Gamma_{kl}}\right)\nonumber\\
&=&-\Gamma_{kl}\,.
\end{eqnarray}
Let us define $\Xi=[X_{1}^{T}\ X_{2}^{T}\ \cdots\ X_{q}^{T}]^{T}$
with $X_{i}\in\Complex^{n\times n}$ as $\Xi = [e_{\bar
k\bar\ell}\otimes I_{n}]E+[e_{k}\otimes I_{n}]$. We claim that
this choice $\Xi$ and $\Gamma_{kl}$ defined in \eqref{eqn:stock}
together satisfy \eqref{eqn:effcond}, i.e.,
\begin{eqnarray*}\label{eqn:smoking}
L\Xi+(e_{\ell}-e_{k})\otimes\Gamma_{kl}=0\quad\mbox{subject
to}\quad X_{k}=I_{n}\ \mbox{and}\ X_{\ell}=0\,.
\end{eqnarray*}
Note that $X_{k}=[e_{k}\otimes I_{n}]^{T}\Xi=[e_{k}\otimes
I_{n}]^{T}([e_{\bar k\bar\ell}\otimes I_{n}]E+[e_{k}\otimes
I_{n}])=I_{n}$ since $e_{k}^{T}e_{\bar k\bar\ell}=0$ and
$e_{k}^{T}e_{k}=1$. Likewise, $X_{\ell}=[e_{\ell}\otimes
I_{n}]^{T}\Xi=0$ since $e_{\ell}^{T}e_{\bar k\bar\ell}=0$ and
$e_{\ell}^{T}e_{k}=0$. To establish
$L\Xi+(e_{\ell}-e_{k})\otimes\Gamma_{kl}=0$ it suffices that we
show $[e_{i}\otimes
I_{n}]^{T}(L\Xi+(e_{\ell}-e_{k})\otimes\Gamma_{kl})=0$ for all
$i\in\{1,\,2,\,\ldots,\,q\}$. Let $i\neq k,\,\ell$. Then we can
write $[e_{i}\otimes I_{n}]^{T}=[e_{i}\otimes I_{n}]^{T}[e_{\bar
k\bar\ell}\otimes I_{n}][e_{\bar k\bar\ell}\otimes I_{n}]^{T}$.
Thence, using \eqref{eqn:lock} and $e_{\bar
k\bar\ell}^{T}(e_{\ell}-e_{k})=0$ we obtain
\begin{eqnarray*}
[e_{i}\otimes I_{n}]^{T}(L\Xi+(e_{\ell}-e_{k})\otimes\Gamma_{kl})
&=&[e_{i}\otimes I_{n}]^{T}[e_{\bar k\bar\ell}\otimes
I_{n}][e_{\bar k\bar\ell}\otimes
I_{n}]^{T}(L\Xi+(e_{\ell}-e_{k})\otimes\Gamma_{kl})\\
&=&[e_{i}\otimes I_{n}]^{T}[e_{\bar k\bar\ell}\otimes I_{n}](L_{
{\bar k}{\bar \ell},{\bar k}{\bar \ell}}\,E+L_{{\bar k}{\bar
\ell},k})\\
&=&0\,.
\end{eqnarray*}
Let $i=k$. Using \eqref{eqn:stock} and
$e_{k}^{T}(e_{\ell}-e_{k})=-1$ we can write
\begin{eqnarray*}
[e_{k}\otimes I_{n}]^{T}(L\Xi+(e_{\ell}-e_{k})\otimes\Gamma_{kl})
=L_{k,{\bar k}{\bar\ell}}\,E+L_{k,k}-\Gamma_{kl}=0\,.
\end{eqnarray*}
Finally, let $i=\ell$. Using \eqref{eqn:barrel} and
$e_{\ell}^{T}(e_{\ell}-e_{k})=1$ we reach
\begin{eqnarray*}
[e_{\ell}\otimes
I_{n}]^{T}(L\Xi+(e_{\ell}-e_{k})\otimes\Gamma_{kl}) =L_{\ell,{\bar
k}{\bar\ell}}\,E+L_{\ell,k}+\Gamma_{kl}=0\,.
\end{eqnarray*}

{\em Hermitian positive semidefiniteness.} Let $[X_{1}^{T}\
X_{2}^{T}\ \cdots\ X_{q}^{T}]^{T}=\Xi$ with
$X_{i}\in\Complex^{n\times n}$ and
$\Gamma_{kl}\in\Complex^{n\times n}$ satisfy \eqref{eqn:effcond}.
Since $X_{k}=I_{n}$ and $X_{\ell}=0$ we have
$\Xi^{*}[(e_{k}-e_{\ell})\otimes \Gamma_{kl}]=\Gamma_{kl}$. We can
write
\begin{eqnarray*}
\Gamma_{kl}
&=&\Xi^{*}[(e_{k}-e_{\ell})\otimes \Gamma_{kl}]\\
&=&\Xi^{*}L\Xi\,.
\end{eqnarray*}
Since $L$ is Hermitian positive semidefinite so is $\Xi^{*}L\Xi$.

{\em Uniqueness.} Let $[X_{1}^{T}\ X_{2}^{T}\ \cdots\
X_{q}^{T}]^{T}=\Xi$ with $X_{i}\in\Complex^{n\times n}$ and
$\Gamma_{kl}\in\Complex^{n\times n}$ satisfy \eqref{eqn:effcond}.
Suppose that $\Gamma_{kl}$ is not unique. Then we can find
$[{\tilde X}_{1}^{T}\ {\tilde X}_{2}^{T}\ \cdots\ {\tilde
X}_{q}^{T}]^{T}=\tilde\Xi$ and $\tilde\Gamma_{kl}\neq\Gamma_{kl}$
that also satisfy \eqref{eqn:effcond}. Recall $X_{k}={\tilde
X}_{k}=I_{n}$ and $X_{\ell}={\tilde X}_{\ell}=0$. Using $L^{*}=L$
and $\Gamma_{kl}^{*}=\Gamma_{kl}$ one can generate the following
contradiction
\begin{eqnarray*}
{\tilde \Gamma}_{kl} &=&\Xi^{*}[(e_{k}-e_{\ell})\otimes
{\tilde\Gamma}_{kl}] =\Xi^{*}L{\tilde\Xi}
=(L\Xi)^{*}{\tilde\Xi}\\
&=&[(e_{k}-e_{\ell})\otimes \Gamma_{kl}]^{*}{\tilde\Xi}
=({\tilde\Xi}^{*}[(e_{k}-e_{\ell})\otimes \Gamma_{kl}])^{*}
=\Gamma_{kl}^{*}\\
&=&\Gamma_{kl}\,.
\end{eqnarray*}
Thanks to uniqueness we can combine \eqref{eqn:lock} and
\eqref{eqn:stock} to write {\em the} expression for effective
conductance
\begin{eqnarray*}
\Gamma_{k\ell}=L_{k,k}-L_{k,\bar k\bar\ell}\,(L_{ {\bar k}{\bar
\ell},{\bar k}{\bar \ell}})^{+}L_{{\bar k}{\bar \ell},k}\,.
\end{eqnarray*}

{\em Reciprocity.} Let $[X_{1}^{T}\ X_{2}^{T}\ \cdots\
X_{q}^{T}]^{T}=\Xi$ with $X_{i}\in\Complex^{n\times n}$ and
$\Gamma_{kl}\in\Complex^{n\times n}$ satisfy \eqref{eqn:effcond}.
Define $\tilde X_{i}=I_{n}-X_{i}$ for $i=1,\,2,\,\ldots,\,q$. Note
that $\tilde X_{\ell}=I_{n}$ and $\tilde X_{k}=0$ because
$X_{\ell}=0$ and $X_{k}=I_{n}$. Let $\tilde \Xi=[\tilde X_{1}^{T}\
\tilde X_{2}^{T}\ \cdots\ \tilde X_{q}^{T}]^{T}$. Recall that we
have $L[{\bf 1}_{q}\otimes I_{n}]=0$. We can write
\begin{eqnarray*}
L\tilde\Xi=L([{\bf 1}_{q}\otimes
I_{n}]-\Xi)=-L\Xi=(e_{\ell}-e_{k})\otimes\Gamma_{kl}
\end{eqnarray*}
which implies $\Gamma_{\ell k}=\Gamma_{k\ell}$.
\end{proof}

\vspace{0.12in}

Effective conductance turns out to be a definite indicator of
pairwise connectivity. In particular, it allows us to improve
Theorem~\ref{thm:klobs}.

\begin{theorem}\label{thm:effcond}
The following are equivalent ($k\neq\ell$).
\begin{enumerate}
\item The array $[(C_{ij}),\,A]$ is $(k,\,\ell)$-observable. \item
The interconnection graph $\Gamma(W_{ij})$ is
$(k,\,\ell)$-connected. \item The effective conductance
$\Gamma_{k\ell}(W_{ij})$ is full rank.
\end{enumerate}
\end{theorem}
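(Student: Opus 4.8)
The plan is to lean on Theorem~\ref{thm:klobs} for the equivalence $1\Leftrightarrow 2$, so that the only genuinely new content is the equivalence $2\Leftrightarrow 3$. Write $L={\rm lap}\,\Gamma(W_{ij})$ and let $\Xi=[X_{1}^{T}\ X_{2}^{T}\ \cdots\ X_{q}^{T}]^{T}$ be the solution of \eqref{eqn:effcond} furnished by Proposition~\ref{prop:effcond}. I would rely on two facts proved there: first, the defining relation $L\Xi=(e_{k}-e_{\ell})\otimes\Gamma_{k\ell}$ together with the representation $\Gamma_{k\ell}=\Xi^{*}L\Xi$; and second, the bookkeeping identity coming from the constraints $X_{k}=I_{n}$, $X_{\ell}=0$, namely $[(e_{k}-e_{\ell})\otimes I_{n}]^{T}\Xi=X_{k}-X_{\ell}=I_{n}$. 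Recall also that $({\rm null}\,L)^{\perp}={\rm range}\,L$ since $L$ is Hermitian.

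For $3\Rightarrow 2$, assume $\Gamma_{k\ell}$ is full rank, hence invertible. From $L\Xi=(e_{k}-e_{\ell})\otimes\Gamma_{k\ell}$ every column of $(e_{k}-e_{\ell})\otimes\Gamma_{k\ell}$ lies in ${\rm range}\,L$. Invertibility of $\Gamma_{k\ell}$ gives ${\rm range}\,[(e_{k}-e_{\ell})\otimes\Gamma_{k\ell}]={\rm range}\,[(e_{k}-e_{\ell})\otimes I_{n}]$, because as $v$ ranges over $\Complex^{n}$ so does $\Gamma_{k\ell}v$. Consequently ${\rm range}\,[(e_{k}-e_{\ell})\otimes I_{n}]\subset{\rm range}\,L=({\rm null}\,L)^{\perp}$, which is precisely $(k,\,\ell)$-connectivity of $\Gamma(W_{ij})$.

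For $2\Rightarrow 3$ I argue by contraposition. Suppose $\Gamma_{k\ell}$ is singular, so there is a nonzero $v\in\Complex^{n}$ with $\Gamma_{k\ell}v=0$. Then $(\Xi v)^{*}L(\Xi v)=v^{*}\Xi^{*}L\Xi v=v^{*}\Gamma_{k\ell}v=0$, and since $L$ is Hermitian positive semidefinite this forces $L\Xi v=0$, i.e.\ $\Xi v\in{\rm null}\,L$. Yet the bookkeeping identity gives $[(e_{k}-e_{\ell})\otimes I_{n}]^{T}(\Xi v)=v\neq 0$. Hence ${\rm null}\,L$ contains a vector that is not orthogonal to ${\rm range}\,[(e_{k}-e_{\ell})\otimes I_{n}]$, which contradicts the inclusion ${\rm range}\,[(e_{k}-e_{\ell})\otimes I_{n}]\subset({\rm null}\,L)^{\perp}$. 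Therefore $\Gamma(W_{ij})$ is not $(k,\,\ell)$-connected.

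The computations here are short, and I do not anticipate a real obstacle. The only two points needing care are the passage from ``full rank'' to the range equality ${\rm range}\,[(e_{k}-e_{\ell})\otimes\Gamma_{k\ell}]={\rm range}\,[(e_{k}-e_{\ell})\otimes I_{n}]$ in the forward direction, and, in the backward direction, the observation that the quadratic representation $\Gamma_{k\ell}=\Xi^{*}L\Xi$ lets one lift any null vector $v$ of $\Gamma_{k\ell}$ to the \emph{explicit} null vector $\Xi v$ of $L$ that witnesses the failure of connectivity. Once these Kronecker-product identities are in hand, both implications are immediate, and combining $2\Leftrightarrow 3$ with Theorem~\ref{thm:klobs} closes the cycle.
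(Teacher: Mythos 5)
Your proposal is correct. Two of its three pieces coincide with the paper's proof: the reduction $1\Leftrightarrow 2$ is by Theorem~\ref{thm:klobs} in both, and your $3\Rightarrow 2$ via the range equality ${\rm range}\,[(e_{k}-e_{\ell})\otimes\Gamma_{k\ell}]={\rm range}\,[(e_{k}-e_{\ell})\otimes I_{n}]$ is the same computation the paper performs by right-multiplying the defining equation by $\Gamma_{k\ell}^{-1}$. Where you genuinely depart is $2\Rightarrow 3$. The paper argues directly: from $(k,\ell)$-connectedness it solves $L\tilde\Xi=(e_{k}-e_{\ell})\otimes I_{n}$, normalizes so that $\hat X_{\ell}=0$ using $L[{\bf 1}_{q}\otimes I_{n}]=0$, invokes the rank identity $n={\rm rank}(L\hat\Xi)={\rm rank}(\hat\Xi^{*}L\hat\Xi)={\rm rank}\,\hat X_{k}^{*}$ to conclude $\hat X_{k}$ is invertible, and then identifies $\Gamma_{k\ell}=\hat X_{k}^{-1}$. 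You instead prove the contrapositive: using the representation $\Gamma_{k\ell}=\Xi^{*}L\Xi$ (which indeed follows in one line from \eqref{eqn:effcond} together with $X_{k}=I_{n}$, $X_{\ell}=0$, and is derived inside the proof of Proposition~\ref{prop:effcond}), you lift any null vector $v$ of $\Gamma_{k\ell}$ to $\Xi v\in{\rm null}\,L$ with $[(e_{k}-e_{\ell})\otimes I_{n}]^{T}\Xi v=v\neq 0$, which destroys the inclusion ${\rm range}\,[(e_{k}-e_{\ell})\otimes I_{n}]\subset({\rm null}\,L)^{\perp}$. This is logically complete and there is no circularity, since Proposition~\ref{prop:effcond} (which supplies $\Xi$ unconditionally, connected or not) is proved independently of the theorem. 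Your route is shorter because it recycles the existence of $\Xi$ rather than re-solving the injection problem under the connectivity hypothesis, and it makes the failure mechanism transparent: a singular conductance yields a nonconstant zero-mode of the Laplacian with distinct $k$th and $\ell$th blocks. What the paper's direct route buys in exchange is the extra identity $\Gamma_{k\ell}=\hat X_{k}^{-1}$, exhibiting the effective conductance as the inverse of the $k$th block of the unit-current-injection solution -- the matrix-valued analogue of effective resistance -- which your contrapositive argument does not produce. The only presentational caveat is that $\Gamma_{k\ell}=\Xi^{*}L\Xi$ is stated only within the proof of Proposition~\ref{prop:effcond}, not in its statement, so you should re-derive it (one line, as you essentially note) rather than cite it.
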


\begin{proof}
{\em 1\ $\Longleftrightarrow$\ 2.} By Theorem~\ref{thm:klobs}.

{\em 2$\implies$3.} Let us employ the shortcuts
$\Gamma_{k\ell}=\Gamma_{k\ell}(W_{ij})$ and $L={\rm
lap}\,\Gamma(W_{ij})$. Suppose that $\Gamma(W_{ij})$ is
$(k,\,\ell)$-connected. Then ${\rm
range}\,[(e_{k}-e_{\ell})\otimes I_{n}]\subset({\rm
null}\,L)^{\perp}={\rm range}\,L$ which implies that we can find
matrices $\tilde X_{1},\,\tilde X_{2},\,\ldots,\,\tilde
X_{q}\in\Complex^{n\times n}$ such that
\begin{eqnarray*}
L\left[\begin{array}{c}\tilde X_{1}\\ \vdots \\
\tilde X_{q}\end{array}\right]=(e_{k}-e_{\ell})\otimes I_{n}\,.
\end{eqnarray*}
Define $\hat X_{i}=\tilde X_{i}-\tilde X_{\ell}$ for
$i=1,\,2,\,\ldots,\,q$. Note that $\hat X_{\ell}=0$ and we can
write
\begin{eqnarray}\label{eqn:ular0}
L\left[\begin{array}{c}\hat X_{1}\\ \vdots \\
\hat X_{q}\end{array}\right]=L\left(\left[\begin{array}{c}\tilde X_{1}\\ \vdots \\
\tilde X_{q}\end{array}\right]-[{\bf 1}_{q}\otimes I_{n}]\,\tilde
X_{\ell}\right)=(e_{k}-e_{\ell})\otimes I_{n}
\end{eqnarray}
since $L[{\bf 1}_{q}\otimes I_{n}]=0$. Recalling that $L$ is
Hermitian positive semidefinite we proceed as follows
\begin{eqnarray}\label{eqn:ular1}
n={\rm rank}\,[(e_{k}-e_{\ell})\otimes I_{n}]={\rm rank}\left(L\left[\begin{array}{c}\hat X_{1}\\ \vdots \\
\hat X_{q}\end{array}\right]\right)={\rm rank}\left(\left[\begin{array}{c}\hat X_{1}\\ \vdots \\
\hat X_{q}\end{array}\right]^{*}L\left[\begin{array}{c}\hat X_{1}\\ \vdots \\
\hat X_{q}\end{array}\right]\right)\,.
\end{eqnarray}
Then we write (recalling $\hat X_{\ell}=0$)
\begin{eqnarray}\label{eqn:ular2}
\left[\begin{array}{c}\hat X_{1}\\ \vdots \\
\hat X_{q}\end{array}\right]^{*}L\left[\begin{array}{c}\hat X_{1}\\ \vdots \\
\hat X_{q}\end{array}\right]=\left[\begin{array}{c}\hat X_{1}\\ \vdots \\
\hat X_{q}\end{array}\right]^{*}[(e_{k}-e_{\ell})\otimes
I_{n}]=\hat X_{k}^{*}\,.
\end{eqnarray}
Combining \eqref{eqn:ular1} and \eqref{eqn:ular2} we deduce that
$\hat X_{k}$ is nonsingular. This allows us to define $X_{i}=\hat
X_{i}\hat X_{k}^{-1}$ for $i=1,\,2,\,\ldots,\,q$. Note that
$X_{k}=I_{n}$ and $X_{\ell}=0$. Revisiting \eqref{eqn:ular0} we
can write
\begin{eqnarray*}
L\left[\begin{array}{c}X_{1}\\ \vdots \\
X_{q}\end{array}\right]=L\left[\begin{array}{c}\hat X_{1}\\ \vdots \\
\hat X_{q}\end{array}\right]\hat
X_{k}^{-1}=[(e_{k}-e_{\ell})\otimes I_{n}]\hat
X_{k}^{-1}=(e_{k}-e_{\ell})\otimes \hat X_{k}^{-1}
\end{eqnarray*}
which implies (since $X_{k}=I_{n}$ and $X_{\ell}=0$) that
$\Gamma_{kl}=\hat X_{k}^{-1}$. Clearly, $\Gamma_{kl}$ is full
rank.

{\em 3$\implies$2.} Suppose ${\rm rank}\,\Gamma_{k\ell}=n$. Then
$\Gamma_{k\ell}^{-1}$ exists. Recall that $\Gamma_{kl}$ satisfies
\eqref{eqn:effcond} for some
$X_{1},\,X_{1},\,\ldots,\,X_{q}\in\Complex^{n\times n}$. We can
write
\begin{eqnarray*}
L\left[\begin{array}{c}X_{1}\Gamma_{k\ell}^{-1}\\ \vdots \\
X_{q}\Gamma_{k\ell}^{-1}\end{array}\right]=L\left[\begin{array}{c}X_{1}\\ \vdots \\
X_{q}\end{array}\right]\Gamma_{k\ell}^{-1}=[(e_{k}-e_{\ell})\otimes
\Gamma_{k\ell}]\,\Gamma_{k\ell}^{-1}=(e_{k}-e_{\ell})\otimes
I_{n}\,.
\end{eqnarray*}
Hence ${\rm range}\,[(e_{k}-e_{\ell})\otimes I_{n}]\subset{\rm
range}\,L=({\rm null}\,L)^{\perp}$. That is, $\Gamma(W_{ij})$ is
$(k,\,\ell)$-connected.
\end{proof}

\vspace{0.12in}

Finally, we present a result on detectability, which can be
considered as an extension of the well-known PBH test.

\begin{theorem}\label{thm:PBH}
The array $[(C_{ij}),\,A]$ is $(k,\,\ell)$-detectable
($k\neq\ell$) if and only if
\begin{eqnarray}\label{eqn:PBH}
{\rm rank}\left[\begin{array}{c}A-\lambda
I_{n}\\
\Gamma_{k\ell}(W_{ij})\end{array}\right]=n\quad\mbox{for all}\quad
{\rm Re}\,\lambda\geq 0\,.
\end{eqnarray}
\end{theorem}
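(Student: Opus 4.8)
The plan is to prove both implications through their contrapositives, after first recasting the rank condition \eqref{eqn:PBH} as a statement about the null space of the effective conductance. Write $L={\rm lap}\,\Gamma(W_{ij})$ and $\Gamma_{k\ell}=\Gamma_{k\ell}(W_{ij})$, and abbreviate $P=[(e_{k}-e_{\ell})\otimes I_{n}]^{T}$. Because $A-\lambda I_{n}$ already has rank $n$ whenever $\lambda$ is not an eigenvalue of $A$, the condition \eqref{eqn:PBH} can only fail at $\lambda=\mu_{\sigma}$, and it fails there exactly when some nonzero $v$ satisfies both $(A-\mu_{\sigma}I_{n})v=0$ and $\Gamma_{k\ell}v=0$. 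Thus \eqref{eqn:PBH} is equivalent to the statement that no eigenvector of $A$ associated with an eigenvalue $\mu_{\sigma}$ having ${\rm Re}\,\mu_{\sigma}\geq 0$ lies in ${\rm null}\,\Gamma_{k\ell}$.

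The workhorse will be an identity linking ${\rm null}\,\Gamma_{k\ell}$ to ${\rm null}\,L$. Let $\Xi=[X_{1}^{T}\ \cdots\ X_{q}^{T}]^{T}$ be the solution furnished by Proposition~\ref{prop:effcond}, so that $L\Xi=(e_{k}-e_{\ell})\otimes\Gamma_{k\ell}$ with $X_{k}=I_{n}$ and $X_{\ell}=0$. Taking conjugate transposes and using $L^{*}=L$ together with $\Gamma_{k\ell}^{*}=\Gamma_{k\ell}$ gives $\Xi^{*}L=\Gamma_{k\ell}P$. From $L\Xi=(e_{k}-e_{\ell})\otimes\Gamma_{k\ell}$ I read off that $v\in{\rm null}\,\Gamma_{k\ell}$ implies $\Xi v\in{\rm null}\,L$ with $P\Xi v=(X_{k}-X_{\ell})v=v$; from $\Xi^{*}L=\Gamma_{k\ell}P$ I read off that $\xi_{0}\in{\rm null}\,L$ implies $\Gamma_{k\ell}(P\xi_{0})=\Xi^{*}L\xi_{0}=0$. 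Together these yield ${\rm null}\,\Gamma_{k\ell}=P({\rm null}\,L)$, which drives the whole argument.

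For the direction that $(k,\,\ell)$-detectability implies \eqref{eqn:PBH}, I argue the contrapositive. Suppose \eqref{eqn:PBH} fails, so there is a nonzero eigenvector $v$ of $A$ with eigenvalue $\mu_{\sigma}$, ${\rm Re}\,\mu_{\sigma}\geq 0$, and $\Gamma_{k\ell}v=0$. Set $\xi(0)=\Xi v$, that is $x_{i}(0)=X_{i}v$. Then $L\xi(0)=(e_{k}-e_{\ell})\otimes(\Gamma_{k\ell}v)=0$, so Lemma~\ref{lem:two} gives $y_{ij}(t)\equiv 0$ for all $(i,\,j)$. On the other hand $x_{k}(t)-x_{\ell}(t)=e^{At}(x_{k}(0)-x_{\ell}(0))=e^{At}P\xi(0)=e^{At}v=e^{\mu_{\sigma}t}v$, whose norm is $|e^{\mu_{\sigma}t}|\,\|v\|\not\to 0$ because ${\rm Re}\,\mu_{\sigma}\geq 0$. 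Hence the array is not $(k,\,\ell)$-detectable.

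For the converse I again argue the contrapositive, and here lies the real work. Suppose the array is not $(k,\,\ell)$-detectable, witnessed by data with $L\xi(0)=0$ and $\|x_{k}(t)-x_{\ell}(t)\|\not\to 0$. Following the mode decomposition in the proof of Theorem~\ref{thm:det}, write $\xi(t)=\sum_{\sigma}p_{\sigma}(t)e^{\mu_{\sigma}t}$; then $L\xi(t)\equiv 0$ forces $L\vartheta=0$ for every coefficient vector $\vartheta$ of every $p_{\sigma}$, while $P\xi(t)\not\to 0$ forces $Pp_{\sigma}(t)\not\equiv 0$ for some $\sigma$ with ${\rm Re}\,\mu_{\sigma}\geq 0$ (the stable modes decay, so a surviving term must have ${\rm Re}\,\mu_{\sigma}\geq 0$). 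Fix such a $\sigma$ with coefficients $\vartheta_{0},\ldots,\vartheta_{r}$; exactly as in Theorem~\ref{thm:det} they satisfy the chain $([I_{q}\otimes A]-\mu_{\sigma}I_{nq})\vartheta_{\nu}=(\nu+1)\vartheta_{\nu+1}$ with $([I_{q}\otimes A]-\mu_{\sigma}I_{nq})\vartheta_{r}=0$, and one checks $P\vartheta_{0}\neq 0$. The decisive observation is that $P[I_{q}\otimes A]=AP$, so applying $P$ and writing $w_{\nu}=P\vartheta_{\nu}$ produces a genuine Jordan chain for $A$ at $\mu_{\sigma}$, namely $(A-\mu_{\sigma}I_{n})w_{\nu}=(\nu+1)w_{\nu+1}$ and $(A-\mu_{\sigma}I_{n})w_{r}=0$. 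Since $w_{0}\neq 0$, the last nonzero $w_{j}$ in this chain is a nonzero \emph{eigenvector} of $A$ with eigenvalue $\mu_{\sigma}$; and since $\vartheta_{j}\in{\rm null}\,L$, the workhorse identity gives $\Gamma_{k\ell}w_{j}=\Gamma_{k\ell}(P\vartheta_{j})=0$. Thus an eigenvector with ${\rm Re}\,\mu_{\sigma}\geq 0$ sits in ${\rm null}\,\Gamma_{k\ell}$, so \eqref{eqn:PBH} fails. The main obstacle is precisely this passage from a non-decaying mode to a true eigenvector: the projected chain may begin with a generalized eigenvector, and only its terminal element is guaranteed to be an eigenvector annihilated by $\Gamma_{k\ell}$ — the same non-diagonalizability subtlety exhibited by the preceding counterexample.
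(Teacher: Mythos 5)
Your proof is correct, and its skeleton is the paper's: the easy direction constructs initial data $\xi(0)=\Xi v$ from the solution $\Xi$ of Proposition~\ref{prop:effcond} (the paper does literally this, writing ${\bar\rho}_i=X_i\rho$), and the hard direction reuses the mode decomposition and coefficient chain from the proof of Theorem~\ref{thm:det}, projects it through $P=[(e_k-e_\ell)\otimes I_n]^T$ via the intertwining $P[I_q\otimes A]=AP$, and extracts the last nonzero element of the projected chain as a genuine eigenvector of $A$ at $\mu_\sigma$ --- exactly the role played by the paper's choice of the index $\nu$ with $P\vartheta_\nu\neq 0$ and $P([I_q\otimes A]-\mu_\sigma I_{nq})\vartheta_\nu=0$. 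Where you genuinely depart is in crystallizing the two effective-conductance computations into the single identity ${\rm null}\,\Gamma_{k\ell}=P\,({\rm null}\,L)$, obtained from $L\Xi=(e_k-e_\ell)\otimes\Gamma_{k\ell}$ and its conjugate transpose $\Xi^*L=\Gamma_{k\ell}P$. The paper instead argues inline and in one direction needs positive semidefiniteness: it shows $\rho_k^*\Gamma_{k\ell}\rho_k=\zeta_1^*L\Xi\rho_k=0$ and invokes $\Gamma_{k\ell}=\Gamma_{k\ell}^*\geq 0$ to conclude $\Gamma_{k\ell}\rho_k=0$, after first recentering $\vartheta_\nu$ by subtracting the replicated $\ell$th block so that $L\zeta_1=0$; your linear identity $\Gamma_{k\ell}P\vartheta_j=\Xi^*L\vartheta_j=0$ makes both the PSD step and the recentering unnecessary (you still use $\Gamma_{k\ell}^*=\Gamma_{k\ell}$ from Proposition~\ref{prop:effcond}, but only to transpose, not to pass from a vanishing quadratic form to a vanishing product). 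The lemma also buys more than this theorem: since $(k,\ell)$-connectivity of $\Gamma(W_{ij})$ is precisely the statement $P({\rm null}\,L)=\{0\}$, your identity immediately re-derives the equivalence of items 2 and 3 of Theorem~\ref{thm:effcond}, which the paper proves separately by a rank computation. The trade-off is that the paper's inline version keeps each direction self-contained, while yours isolates a reusable structural fact about effective conductance; both are sound.
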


\begin{proof}
Suppose that $[(C_{ij}),\,A]$ is not $(k,\,\ell)$-detectable. Then
we can find some initial conditions
$x_{1}(0),\,x_{2}(0),\,\ldots,\,x_{q}(0)$ for which the solutions
$x_{i}(t)$ of the systems~\eqref{eqn:array} yield
\begin{eqnarray}\label{eqn:and}
y_{ij}(t)\equiv 0\ \mbox{for all}\ (i,\,j)\ \mbox{and}\
\|x_{k}(t)-x_{\ell}(t)\|\not\to 0\,.
\end{eqnarray}
Earlier we have discovered (in the proof of Theorem~\ref{thm:det})
that \eqref{eqn:and} implies the existences of an eigenvalue
$\mu_{\sigma}$ in the closed left-half plane (${\rm
Re}\,\mu_{\sigma}\geq 0$) and vectors
$\vartheta_{0},\,\vartheta_{1},\,\ldots,\,\vartheta_{r}\in(\Complex^{n})^{q}$
satisfying:
\begin{itemize}
\item $([I_{q}\otimes
A]-\mu_{\sigma}I_{nq})\vartheta_{\nu}=(\nu+1)\vartheta_{\nu+1}$
for all $\nu\in\{0,\,1,\,\ldots,\,r-1\}$, \item $([I_{q}\otimes
A]-\mu_{\sigma}I_{nq})\vartheta_{r}=0$, \item
$[(e_{k}-e_{\ell})\otimes I_{n}]^{T}\vartheta_{0}\neq 0$, \item
$L\vartheta_{\nu}=0$ for all $\nu\in\{0,\,1,\,\ldots,\,r\}$,
\end{itemize}
where $L={\rm lap}\,\Gamma(W_{ij})$. Let us now fix an index
$\nu\in\{0,\,1,\,\ldots,\,r\}$ that satisfies
$[(e_{k}-e_{\ell})\otimes I_{n}]^{T}\vartheta_{\nu}\neq 0$ and
$[(e_{k}-e_{\ell})\otimes I_{n}]^{T}([I_{q}\otimes
A]-\mu_{\sigma}I_{nq})\vartheta_{\nu}=0$. Such $\nu$ should exist
because $[(e_{k}-e_{\ell})\otimes I_{n}]^{T}\vartheta_{0}\neq 0$
and $([I_{q}\otimes A]-\mu_{\sigma}I_{nq})\vartheta_{r}=0\in{\rm
null}\,[(e_{k}-e_{\ell})\otimes I_{n}]^{T}$. Let $[{\tilde
\rho}_{1}^{T}\ {\tilde \rho}_{2}^{T}\ \cdots\ {\tilde
\rho}_{q}^{T}]^{T}=\vartheta_{\nu}$ with ${\tilde
\rho}_{i}\in\Complex^{n}$. Then let
$\rho_{i}={\tilde\rho}_{i}-{\tilde\rho}_{\ell}$ for
$i=1,\,2,\,\ldots,\,q$ and $\zeta_{1}=[\rho_{1}^{T}\ \rho_{2}^{T}\
\cdots\ \rho_{q}^{T}]^{T}$. Observe that
$\rho_{k}={\tilde\rho}_{k}-{\tilde\rho}_{\ell}=[(e_{k}-e_{\ell})\otimes
I_{n}]^{T}\vartheta_{\nu}\neq 0$ and $\rho_{\ell}=0$. Moreover, we
can write
\begin{eqnarray*}
[A-\mu_{\sigma}I_{n}]\rho_{k}
&=&[A-\mu_{\sigma}I_{n}][(e_{k}-e_{\ell})\otimes I_{n}]^{T}\vartheta_{\nu}\\
&=&[(e_{k}-e_{\ell})\otimes I_{n}]^{T}([I_{q}\otimes A]-\mu_{\sigma}I_{nq})\vartheta_{\nu}\\
&=&0\,.
\end{eqnarray*}
Hence $\rho_{k}$ is an eigenvector of $A$ with eigenvalue
$\mu_{\sigma}$. Lastly, $L\zeta_{1}=L(\vartheta_{\nu}-[{\tilde
\rho}_{\ell}^{T}\ {\tilde \rho}_{\ell}^{T}\ \cdots\ {\tilde
\rho}_{\ell}^{T}]^{T})=0$ because $L\vartheta_{\nu}=0$ and
$[{\tilde \rho}_{\ell}^{T}\ {\tilde \rho}_{\ell}^{T}\ \cdots\
{\tilde \rho}_{\ell}^{T}]^{T}\in\setS_{n}\subset{\rm null}\,L$.
Let now $\Gamma_{kl}=\Gamma_{kl}(W_{ij})$ be the effective
conductance satisfying \eqref{eqn:effcond} for some
$X_{1},\,X_{2},\,\ldots,\,X_{q}\in\Complex^{n\times n}$. Also,
note that $\zeta_{1}^{*}[(e_{k}-e_{\ell})\otimes
I_{n}]=\rho_{k}^{*}-\rho_{\ell}^{*}=\rho_{k}^{*}$. By
\eqref{eqn:effcond} we can now write
\begin{eqnarray*}
\rho_{k}^{*}\Gamma_{kl}\rho_{k}&=& \zeta_{1}^{*}[(e_{k}-e_{\ell})\otimes I_{n}]\Gamma_{kl}\rho_{k}\\
&=&\zeta_{1}^{*}[(e_{k}-e_{\ell})\otimes\Gamma_{kl}]\rho_{k}\\
&=&\zeta_{1}^{*}L\left[\begin{array}{c}X_{1}\\ \vdots \\
X_{q}\end{array}\right]\rho_{k}\\
&=&0
\end{eqnarray*}
since $\zeta_{1}^{*}L=(L\zeta_{1})^{*}=0$. By
Proposition~\ref{prop:effcond} the effective conductance
$\Gamma_{kl}$ is Hermitian positive semidefinite. Therefore
$\rho_{k}^{*}\Gamma_{kl}\rho_{k}=0$ yields
$\Gamma_{kl}\rho_{k}=0$. Since $[A-\mu_{\sigma}I_{n}]\rho_{k}=0$
we can write
\begin{eqnarray*}
\left[\begin{array}{c}A-\mu_{\sigma}
I_{n}\\
\Gamma_{k\ell}\end{array}\right]\rho_{k}=0
\end{eqnarray*}
which implies that \eqref{eqn:PBH} fails.

To show the other direction this time suppose that \eqref{eqn:PBH}
fails. This implies that we can find an eigenvalue $\mu_{\sigma}$
on the closed right half-plane (${\rm Re}\,\mu_{\sigma}\geq 0$)
and an eigenvector $\rho\in\Complex^{n}$ satisfying
$\Gamma_{kl}\rho=0$ and $[A-\mu_{\sigma}I_{n}]\rho=0$. Let
$X_{1},\,X_{2},\,\ldots,\,X_{q}\in\Complex^{n\times n}$ satisfy
\eqref{eqn:effcond} and define $\zeta_{2}=[{\bar\rho}_{1}^{T}\
{\bar\rho}_{2}^{T}\ \cdots\ {\bar\rho}_{q}^{T}]^{T}$ where
${\bar\rho}_{i}=X_{i}\rho$. Note that ${\bar\rho}_{k}=\rho$ and
${\bar\rho}_{\ell}=0$ because $X_{k}=I_{n}$ and $X_{\ell}=0$. By
\eqref{eqn:effcond} we can now write
\begin{eqnarray*}
L\zeta_{2}=L\left[\begin{array}{c}X_{1}\\ \vdots \\
X_{q}\end{array}\right]\rho=[(e_{k}-e_{\ell})\otimes\Gamma_{kl}]\rho=(e_{k}-e_{\ell})\otimes(\Gamma_{kl}\rho)=0\,.
\end{eqnarray*}
Let the initial conditions $x_{i}(0)$ of the
systems~\eqref{eqn:array} satisfy $[x_{1}(0)^{T}\ x_{2}(0)^{T}\
\cdots\ x_{q}(0)^{T}]^{T}=\zeta_{2}$. Since
$x_{k}(0)={\bar\rho}_{k}=\rho$ is an eigenvector for the
eigenvalue $\mu_{\sigma}$ we have $x_{k}(t)=\rho
e^{\mu_{\sigma}t}$. Also, $x_{\ell}(0)={\bar\rho}_{\ell}=0$
implies $x_{\ell}(t)\equiv 0$. Now, since $L\zeta_{2}=0$
Lemma~\ref{lem:two} gives us $y_{ij}(t)\equiv 0$ for all
$(i,\,j)$. Moreover, we have
\begin{eqnarray*}
\|x_{k}(t)-x_{\ell}(t)\|=\|\rho
e^{\mu_{\sigma}t}-0\|=|e^{\mu_{\sigma}t}|\cdot\|\rho\|\not\to 0
\end{eqnarray*}
because ${\rm Re}\,\mu_{\sigma}\geq 0$. Therefore the array
$[(C_{ij}),\,A]$ cannot be $(k,\,\ell)$-detectable.
\end{proof}

\section{Conclusion}

In this paper we studied the observability of an array of LTI
systems with identical individual dynamics, where an array was
called observable when identically zero relative outputs implied
identical solutions for the individual systems. In our setup the
relative output for each pair of units admitted a (possibly)
different matrix. This incommensurability of the output matrices
made it necessary to study the observability of the array via the
connectivity of a matrix-weighted interconnection graph instead of
the usual scalar-weighted topologies. In the first part of the
paper we established the equivalence between the observability of
an array and the connectivity of its interconnection graph. In
addition we showed that the observability of an array could be
studied also through the connectivity of the so-called
eigengraphs, each of which corresponded to a particular eigenvalue
of the system matrix of the individual dynamics.

In the second part we investigated the pairwise observability of
an array, where an array was called $(k,\,\ell)$-observable when
identically zero relative outputs implied identical solutions for
the $k$th and $\ell$th individual systems. There too we addressed
the problem from the graph connectivity point of view. Our
findings were partially parallel to those in the first part. In
particular, we obtained the equivalence between the
$(k,\,\ell)$-observability of an array and the
$(k,\,\ell)$-connectivity of its interconnection graph. However,
in contrast with the first part, the $(k,\,\ell)$-observability of
an array was not in general guaranteed by the
$(k,\,\ell)$-connectivity of its eigengraphs. Moreover, we showed
that pairwise observability could be studied via (matrix-valued)
effective conductance, which was obtained from the interconnection
graph by treating its Laplacian as the node admittance matrix of
some resistive network, where nodes were connected by resistors
with matrix-valued conductances. We found that an array was
$(k,\,\ell)$-observable if and only if the associated effective
conductance was full rank.

\bibliographystyle{plain}
\bibliography{references}
\end{document}